\newcommand{\Z}{\mathbb Z}
\newcommand{\Q}{\mathbb Q}
\newcommand{\R}{\mathbb R}
\DeclareMathOperator{\Spec}{Spec}
\DeclareMathOperator{\Pic}{Pic}
\newtheorem{definition}{Definition}
\newtheorem{corollary}{Corollary}
\newtheorem{proposition}{Proposition}
\newtheorem{theorem}{Theorem}
\newtheorem{lemma}{Lemma}
\newtheorem{porism}{Porism}
\title{A scheme-theoretic interpretation of generalized splines}
\author{Kyle Stoltz}
\date{} % or \date{\today} if you want the date shown
\begin{document}

\maketitle

\begin{abstract}
Classical splines feature prominently in approximation theory and numerical analysis, while GKM theory arises in the study of equivariant cohomology. More recently, generalized splines have been studied which simultaneously generalize both classical splines and GKM theory. We show that generalized splines can be understood as a scheme-theoretic phenomenon, and establish the foundations for working with generalized splines via scheme theory. To do this, we first prove that the ring of generalized splines over an edge-labeled graph is isomorphic to the limit of a diagram associated to the edge-labeled graph. This is used to establish the connection to scheme theory, and in particular to secure a local--global principle for generalized splines.

The secondary aim of the paper is to catalog a variety of initial local--global results. We prove a module of multivariate generalized splines over $k[x,y]$ is free when an edge-labeled graph is locally trivial or determined by a cycle on a standard cover, provide criteria for generalized splines over UFDs to be finite projective, for multivariate generalized splines to be free, along with other results. The appendix includes a variety of examples which demonstrate precise experimental control over when a module of splines is free, with computer verification by Macaulay2. We conclude with an account of how deletion and contraction affect the spectrum of a ring of splines.
\end{abstract}

\section{Introduction}
Classical splines are one of the major objects of study within numerical analysis, where they allow for piecewise approximation of a function with prescribed smoothness. As an elementary example, between two consecutive $x$-intercepts of $y=\sin(x)$, the sine curve resembles a parabola. Thus, we could imagine constructing a piecewise function whose components consist of various parabolas such that adjacent ones agree along their neighbors' $x$-intercepts. A better fit would be between cubic polynomials that not only agree at $x$-intercepts, but which have one or more derivatives which agree at $x$-intercepts. Modern examples include higher dimensional domains in addition to more complex smoothness conditions.

More recently, generalized splines have been developed which allow a similar formulation for arbitrary commutative rings, which utilize an edge-labeled graph $(G, \alpha)$ with edges labeled by ideals of a commutative ring $R$. Generalized splines, then, show up as vertex labels such that adjacent vertex labels agree modulo the ideal of their connecting edge. As a basic example, let $G$ be a graph with two vertices and one edge and let $\alpha : G \rightarrow \text{Ideals}(\R[x])$ be an edge labeling which labels the only edge $\langle x-1 \rangle$. Here $(G, \alpha)$ forms an edge-labeled graph. A generalized or ``graph'' spline then is a vertex labeling by two polynomials $P(x), Q(x) \in \R[x]$ such that both agree modulo $\langle x - 1 \rangle$, which just says $P(1) = Q(1)$. So, we can see generalized splines capture agreement of functions at a prescribed point. More generally, however, we could have selected $G$ and let the edge be labeled by the integer ideal $\langle 3 \rangle$. Then a generalized spline for the edge-labeled graph would be a selection of two integers $p,q \in \Z$ such that $p$ and $q$ agree modulo $\langle 3 \rangle$.

Scheme theory provides an algebra-geometry dictionary, where every commutative ring $R$ has an associated locally ringed space $\Spec(R)$, where $R$ is recovered as the global sections of $\Spec(R)$. Here $\Spec(R)$ is the set of prime ideals of $R$ equipped with a topology generated by (complements of) closed sets $V(r) = \{r \equiv 0 \pmod{\mathfrak p} : \mathfrak p \in \Spec(R)\}$ for each $r \in R$, and a structure sheaf. This topology is justified by analogy with the remainder theorem from elementary algebra: polynomial evaluation is equivalent to identifying remainders modulo a linear polynomial. That is, instead of algebraic substitution to evaluate a polynomial at $x=a$, we can determine its remainder when divided by the linear polynomial $x-a$. Thus, we can think of $P(x) \equiv Q(x) \pmod{ x - 1}$ as being in direct analogy with $p \equiv q \pmod{3}$ as generalized function substitutions. The topology for schemes then is a generalization of the Zariski topology for algebraic varieties where closed sets are determined by the zero loci of polynomials.

Since generalized splines were explicitly designed to generalize agreement of functions along a specified locus, the previous remark is unsurprising. However, the scheme-theoretic implication suggests that generalized splines capture the intuitive notion of classical splines as piecewise functions in a more literal geometric sense than has been noted in the literature for arbitrary commutative rings. That is, while generalized splines were made precisely to generalize classical splines, the generalization has not been considered as a scheme-theoretic phenomenon.

The purpose of this paper is twofold: the first is to provide a systematic account of the intrinsic relationship between generalized splines and schemes. The second purpose is to catalog an initial collection of results which utilize the new perspective, so that its use is clear. In particular, local-global arguments are ubiquitous across mathematics, so their introduction into the study of generalized splines unlocks a variety of related techniques that were previously inaccessible. The local-global applications we collect in this paper exemplify how such techniques can be used to derive concrete results about generalized splines but are far from exhaustive. An appendix is included which features a variety of examples where the results are used to control freeness of a module of splines, in addition to computer verification with Macaulay2. The appendix demonstrates that the results provide precise experimental control over when a module of splines is free.

A final remark is in order: schemes are notoriously difficult to learn and use effectively. However, the author is also of the opinion that the amount of scheme theory required is fairly low, so anyone who wishes to study the material will not need to learn schemes entirely. As an analogy with computer programming, it is not needed to learn all of $C++$ to write code, and in our case a meager subset (think, just $C$) suffices.

\section{Algebraic preliminaries}
Before we can move into scheme theory, we need to first establish a few algebraic preliminaries. To ease notation, we will write edge-labeled graphs with $G$ in lieu of the usual $(G, \alpha)$, with the edge-label implied. We will also assume all edge-labeled graphs $G$ are finite, and therefore we will also assume that every module of generalized splines is a finite $R$-module. Thus, some results which require finite as a hypothesis will not explicitly invoke finiteness, since it automatically holds in situations we will consider.

Let $R$ be a commutative ring, and let $G$ be an edge-labeled graph with edge labeling $\alpha : E \rightarrow \{\text{Ideals of } R\}$. Let $s$ be a graph spline, i.e. a function $s : V \rightarrow R$ such that $s(u) - s(v) \in \alpha(e_{uv})$, where $u,v \in V$ and $e_{uv} \in E$ is the edge that connects $u$ and $v$. Notice the GKM-condition is equivalent to $s(u) \equiv s(v) \bmod{\alpha(e_{uv})}$ and that $(s(u), s(v))$ is in the fiber product of the diagram $R \rightarrow R/\alpha(e_{uv}) \leftarrow R$. These observations form the basis for spline diagrams, which are established as follows.

\begin{definition}
Let $R$ be a commutative ring, and $G$ an edge-labeled graph. Then there is an associated \textbf{spline diagram} in the category of commutative rings, denoted $\mathfrak G$, formed as follows: for each $u,v \in V$ and edge $e_{uv} \in E$, the rings $R_u = R$, $R_v = R$, and $R/\alpha(e_{uv})$, with morphisms $R_u \rightarrow R/\alpha(e_{uv}) \leftarrow R_v$.

The limit of $\mathfrak G$ is the \textbf{ring of limit splines}. Any diagram in the category of commutative rings that can be obtained by replacing the $R/I$ and associated edge morphisms via the isomorphism theorems for rings is also referred to as a spline diagram. Any edge-labeled graph with ideals satisfying the isomorphism requirements specified is said to be an \textbf{associated} edge-labeled graph.
\end{definition}

As an example, consider the edge-labeled graph with three vertices in Figure~\ref{fig:edge-triangle}. This is associated with the spline diagram in Figure~\ref{fig:triangle-spline-diagram}. Because the same ring $R$ appears as an object in several locations in the diagram, each copy is referred to by the associated vertex, $R_u=R$, $R_v=R$, and $R_w=R$. For any commutative ring $R$, and a selection of edge ideals $I_{uv}$, $I_{vw}$, and $I_{wu}$, the ring of splines for the edge-labeled graph is isomorphic to the limit of this spline diagram. The situation is pictured in Figure~\ref{fig:triangle-limit}, noting that each projection is named by the associated vertices.

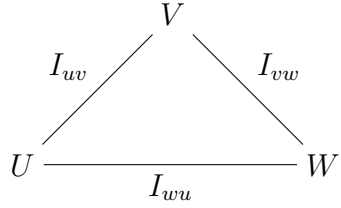
\begin{figure}
\centering
\begin{tikzpicture}[>=stealth, every node/.style={font=\large}]
  \node (A) at (0,0) {$U$};
  \node (B) at (2,2) {$V$};
  \node (C) at (4,0) {$W$};
  \draw (A) -- node[above left, midway] {$I_{uv}$} (B);
  \draw (B) -- node[above right, midway] {$I_{vw}$} (C);
  \draw (C) -- node[below, midway] {$I_{wu}$} (A);
\end{tikzpicture}
\caption{An edge-labeled graph with ideals $I_{uv}, I_{vw}, I_{wu}$.}
\label{fig:edge-triangle}
\end{figure}

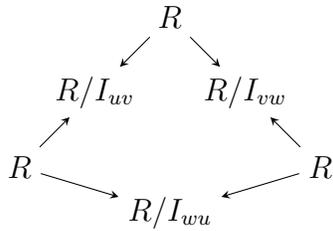
\begin{figure}
\centering
\begin{tikzpicture}[>=stealth, every node/.style={font=\large}]
  \node (A) at (0,0) {$R$};
  \node (B) at (2,2) {$R$};
  \node (C) at (4,0) {$R$};
  \node (QAB) at (1,1)  {$R/I_{uv}$};
  \node (QBC) at (3,1)  {$R/I_{vw}$};
  \node (QCA) at (2,-0.6) {$R/I_{wu}$};
  \draw[->] (A) -- (QAB);
  \draw[->] (B) -- (QAB);
  \draw[->] (B) -- (QBC);
  \draw[->] (C) -- (QBC);
  \draw[->] (C) -- (QCA);
  \draw[->] (A) -- (QCA);
\end{tikzpicture}
\caption{Spline diagram of rings associated to Figure~\ref{fig:edge-triangle}.}
\label{fig:triangle-spline-diagram}
\end{figure}

\begin{figure}
\centering
\begin{tikzpicture}[>=stealth, every node/.style={font=\large}]
  \node (A) at (0,0) {$R$};
  \node (B) at (2,2) {$R$};
  \node (C) at (4,0) {$R$};
  \node (QAB) at (1,1) {$R/I_{uv}$};
  \node (QBC) at (3,1) {$R/I_{vw}$};
  \node (QCA) at (2,-0.6) {$R/I_{wu}$};
  \node[draw, rounded corners, inner sep=4pt] (L) at (1,7) {$P$};
  \draw[->] (A) -- (QAB);
  \draw[->] (B) -- (QAB);
  \draw[->] (B) -- (QBC);
  \draw[->] (C) -- (QBC);
  \draw[->] (C) -- (QCA);
  \draw[->] (A) -- (QCA);
  \draw[->] (L) -- node[above left=-2pt] {$\pi_u$}  (A);
  \draw[->] (L) -- node[above]           {$\pi_v$}  (B);
  \draw[->] (L) -- node[above right=-2pt]{$\pi_w$}  (C);
  \draw[->] (L) -- node[left]  {$\pi_{uv}$} (QAB);
  \draw[->] (L) -- node[right] {$\pi_{vw}$} (QBC);
  \draw[->] (L) -- node[below] {$\pi_{wu}$} (QCA);
\end{tikzpicture}
\caption{Limit object $P$ with projections to the vertex and edge objects.}
\label{fig:triangle-limit}
\end{figure}
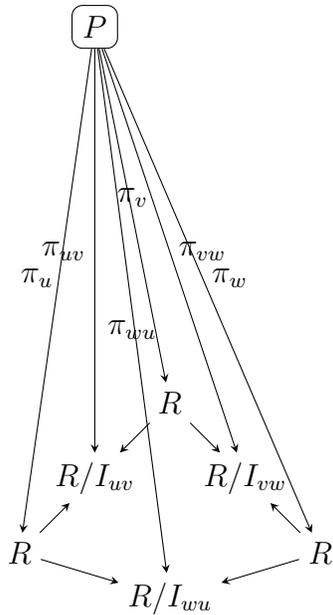

It was brought to the author's attention by Michael DiPasquale that this is related to a construction in \textit{modules of piecewise polynomials and their freeness} by Billera and Rose, who constructed classical spline modules as inverse limits \cite{billera1992modules}. Thus, obtaining generalized splines as a limit of a diagram is a natural extension of the fact classical splines can be obtained as a limit of a diagram. That is, we have effectively observed that Billera and Rose's classical spline construction manifests in the generalized spline context. Continuing, the following theorem justifies the limit claim.

\begin{theorem}
\label{thm:ring-limit-splines}
The ring of splines $R_{G}$ over the edge-labeled graph $G$ is isomorphic to the associated ring of limit splines.
\end{theorem}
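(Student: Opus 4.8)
The plan is to show both objects satisfy the same universal property, or equivalently to exhibit a natural bijection between the two and check it is a ring homomorphism. Concretely, let $P$ denote the limit of the spline diagram $\mathfrak G$. By the explicit description of limits in the category of commutative rings with unity, $P$ is the subring of the product $\left(\prod_{v\in V} R_v\right)\times\left(\prod_{e\in E} R/\alpha(e)\right)$ consisting of those tuples whose components are compatible with every morphism in the diagram. The only nonidentity morphisms are the quotient maps $R_u\to R/\alpha(e_{uv})\leftarrow R_v$, so the compatibility condition says exactly: the $R/\alpha(e_{uv})$-component equals the image of the $R_u$-component, equals the image of the $R_v$-component. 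In particular the factors indexed by edges are redundant — they are determined by the vertex factors — and what remains is the condition $s(u)\equiv s(v)\bmod\alpha(e_{uv})$ for every edge. That is precisely the defining (GKM) condition for a graph spline.

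First I would set up this explicit limit description, citing the standard construction of limits in $\CRing$ (limits are computed on underlying sets, and the subset cut out is closed under the ring operations because each structure map is a ring homomorphism). Second, I would define the candidate map $\Phi\colon R_G\to P$ sending a spline $s$ to the tuple $\big((s(v))_{v\in V},\,(\overline{s(u)})_{e_{uv}\in E}\big)$, and verify that this tuple lies in $P$ — this is immediate from the spline condition. Third, I would check $\Phi$ is a ring homomorphism, which is clear since addition and multiplication in $R_G$ and in $P$ are both computed componentwise from $R$, and it sends $1$ to $1$. Fourth, I would construct the inverse: given a point of $P$, forget the edge-indexed components and keep the vertex-indexed tuple $(r_v)_{v\in V}$; the compatibility condition guarantees $r_u\equiv r_v\bmod\alpha(e_{uv})$, so this tuple is a spline, and the two assignments are visibly mutually inverse.

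One subtlety to address carefully is the phrase in the definition allowing the diagram to be replaced "via the isomorphism theorems for rings" and the notion of an \emph{associated} edge-labeled graph: the theorem is stated for \emph{the} associated ring of limit splines, so I should note that replacing each $R/\alpha(e_{uv})$ by an isomorphic ring (with the structure maps transported along the isomorphism) does not change the limit up to canonical isomorphism — a one-line diagram-chase, since an isomorphism of cones induces an isomorphism of limits. I would state this as a remark or fold it into the proof so that the argument genuinely covers every spline diagram in the equivalence class, not just the canonical quotient presentation. Finally, if multiple edges join the same pair of vertices, or loops occur, I would note the argument is unaffected: each edge simply contributes its own congruence condition, matching the definition of $R_G$.

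I do not expect a serious obstacle here; the content is essentially unwinding definitions. The only place that needs genuine care — and the step I would flag as the "main obstacle," such as it is — is making the explicit limit computation in $\CRing$ precise and confirming that no spurious compatibility constraints arise from the edge objects (they do not, since every edge object is a sink in the diagram, receiving two arrows and emitting none), so that $P$ really is carved out by the vertex data alone.
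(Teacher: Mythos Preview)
Your proposal is correct and takes essentially the same approach as the paper: both arguments amount to identifying the limit $P$ with the set of compatible tuples and observing that compatibility is exactly the GKM condition. The only cosmetic difference is that the paper builds the bijection in the direction $P\to R_G$ and invokes the universal property of $P$ for surjectivity, whereas you build $\Phi\colon R_G\to P$ from the explicit subring-of-the-product description and write down an inverse by hand; your treatment of the ``isomorphism-theorems'' clause and of multi-edges is more careful than the paper's.
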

\begin{proof}
Note that $R_{G}$ can be equipped with projections $\pi_u^G$ to the objects $R_u = R$ and $R_{uv} = R/I_{uv}$ in the diagram via $\pi_u^G(s) = s(u)$. That is, each spline $s$ maps to the label it assigns to the vertex $u$, as an element of $R_u$. Also let $\pi_{uv}^G(s):=s(u)+I_{uv}=s(v)+I_{uv}\in R/I_{uv}$, which is well defined by the GKM-condition. Let $P$ be the ring of limit splines with projections $\pi_u^P : P \rightarrow R_u$ and $\pi^P_{uv} : P \rightarrow R/I_{uv}$. We will produce an isomorphism $f: P \rightarrow R_{G}$ which commutes with the projections.

Let $f: P \rightarrow R_{G}$ be given by mapping $p \in P$ to the spline $s \in R_{G}$ which labels the vertex $u$ as $s(u) = \pi_u^P(p)$. That is, $s$ labels each vertex $u$ with the image of $p$ under the projection $\pi_u^P$. We know such an $s$ is a spline because the projections commute with every edge diagram $R_u \rightarrow R/I_{uv} \leftarrow R_v$, which implies $s(u) \equiv s(v) \bmod I_{uv}$ for all $u$ and $v$ connected by an edge. Now notice:

\begin{align*}
  \pi_u^G(f(p)) &= \pi_u^G(s) \\
                &= s(u) \\
                &= \pi_u^P(p)
\end{align*}

which shows $f$ commutes with the projections. To see that $f$ is one-to-one, suppose $p_1, p_2 \in P$ satisfy $f(p_1) = f(p_2) = s$. Then by definition of $f$ we have that $\pi_u^P(p_1) = s(u)$ and also $\pi_u^P(p_2) = s(u)$ for all projections. But elements of $P$ are identified by their images under the projections, hence $p_1=p_2$ and $f$ is injective.

To see $f$ is surjective, let $\phi$ be the unique morphism from $R_{G}$ to $P$ given by the limit property of $P$. Notice each $s$ maps to $p = \phi(s)$ such that $s(u) = \pi_u^P(p)$ for each $u$. But then by definition of $f$, we have that $f(p) = s$. Hence $f$ is also surjective. Since we already showed $f$ is injective, it follows $f$ is an isomorphism and hence $R_{G} \cong P$ as claimed.
\end{proof}

So we have established that for a ring $R$ and edge-labeled graph $G$, there is a spline diagram $\mathfrak G$ such that its ring of limit splines is isomorphic to the associated ring of generalized splines. It is not the case that a given spline diagram is uniquely associated to an edge-labeled graph. This is because the definition of spline diagram as given allows replacing the edge objects $R/I$ with isomorphic copies, and it is possible for a ring to have two ideals, $I$ and $J$, such that $R/I \cong R/J$. However, since we are studying the rings of splines themselves, we can associate a spline diagram non-canonically with an edge-labeled graph, and then study the ring of splines.

Our next goal is to show that the ring of limit splines can be obtained after finitely many pullbacks or equalizers of surjections. The motivation for why this is needed is intuitive from the view of schemes, but opaque algebraically. Thus, it may be instructive to skip to Section~\ref{sec:pushouts} now, and return to this section later once the motivation is clear.

\begin{theorem}
\label{thm:iterated-pullback}
Let $\mathfrak G$ be a spline diagram with ring of limit splines $R_{G}$, and assume $G$ is connected. Then $R_{G}$ is isomorphic to a ring obtained after finitely many pullbacks or equalizers of surjections. If $G$ is not connected, then $R_{G}$ is obtained as a product of $R_{G'}$ with each $G'$ a connected component of $G$ and the $R_{G'}$ obtained as in the connected case.
\end{theorem}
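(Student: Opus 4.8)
The plan is to prove the connected case by induction on the number of edges, assembling the ring of limit splines one edge at a time as an iterated pullback/equalizer, and then to reduce the disconnected case to it. By Theorem~\ref{thm:ring-limit-splines} it is enough to argue with the concrete ring of graph splines $R_G=\{s:V\to R \mid s(u)-s(v)\in\alpha(e_{uv})\}$, and since replacing an edge object $R/I$ of a spline diagram by an isomorphic copy does not change the limit, there is no loss in taking the edge objects to be honest quotients with their quotient maps. The one preliminary observation I would isolate first is that for \emph{every} edge-labeled graph $G'$ and every vertex $u$ the projection $\pi_u^{G'}:R_{G'}\to R$, $s\mapsto s(u)$, is surjective — in fact split — because the constant function $s\equiv r$ is a spline for any $r\in R$. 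This is exactly what supplies the ``along surjections'' hypothesis at every stage.

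For the connected case, fix a spanning tree $T\subseteq G$ with $n=|V|$ and list the edges $e_1,\dots,e_m$ so that $e_1,\dots,e_{n-1}$ are the edges of $T$, ordered so that each $e_i$ attaches a single new vertex as a leaf to the subtree spanned by $e_1,\dots,e_{i-1}$, and $e_n,\dots,e_m$ are the remaining edges in any order. Let $G_0$ be the single root vertex, so $R_{G_0}=R$ with zero operations performed, and let $G_k$ be the root together with $e_1,\dots,e_k$ and their endpoints. I would then check two gluing steps. First, when $e_k$ with $k\le n-1$ attaches a new vertex $v$ to an existing vertex $u$, the ring $R_{G_k}$ is the pullback of $R_{G_{k-1}}\xrightarrow{q\circ\pi_u}R/\alpha(e_k)\xleftarrow{q}R=R_v$, where $q$ is the quotient map: an element of the pullback is a pair $(t,r)$ with $\pi_u(t)\equiv r\bmod\alpha(e_k)$, which is precisely a spline on $G_{k-1}$ extended to $v$ by the value $r$, and the pullback projections become the evident restriction-to-$G_{k-1}$ and evaluation-at-$v$ maps; since $q$ is surjective this is a pullback along a surjection. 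Second, when $e_k$ with $k\ge n$ joins two vertices $u,v$ already present (both are, since $G_{n-1}=T$ is spanning), $R_{G_k}$ is the equalizer of $q\circ\pi_u,\ q\circ\pi_v:R_{G_{k-1}}\rightrightarrows R/\alpha(e_k)$, namely $\{t\in R_{G_{k-1}}:\pi_u(t)\equiv\pi_v(t)\bmod\alpha(e_k)\}$, the splines on $G_{k-1}$ that additionally satisfy the congruence imposed by $e_k$; by the preliminary observation both maps are surjective. After $m$ steps we reach $R_{G_m}=R_G$, which proves the connected case. Loops and parallel edges cause no difficulty: a loop imposes $0\in\alpha(e)$ and contributes a trivial equalizer, and parallel edges are absorbed by successive equalizers.

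For the disconnected case, if $G^{(1)},\dots,G^{(c)}$ are the connected components of $G$ then a function $V\to R$ is a spline on $G$ exactly when each restriction to $V(G^{(j)})$ is a spline on $G^{(j)}$, with no conditions linking distinct components; hence the natural map $R_G\to\prod_j R_{G^{(j)}}$ is an isomorphism, and each factor $R_{G^{(j)}}$ is obtained as in the connected case.

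The steps above are mechanical once set up, and the place I would be most careful — and what I take to be the actual content of the statement — is the verification that the universal objects of the pullback and equalizer genuinely coincide with the spline rings of the intermediate graphs \emph{and} that the two structure maps involved are surjective at each stage. The edge ordering (spanning tree first, leaves added one at a time) is the organizational device that makes the pullbacks legitimate, while the constant-spline observation is the single algebraic input that keeps every map surjective; together they are exactly what the theorem needs and, per the remark in the text, what is not obvious without unwinding the construction.
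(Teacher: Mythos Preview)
Your proof is correct and follows essentially the same approach as the paper: induction on the number of edges, splitting into a pullback case (attaching a new leaf vertex) and an equalizer case (adding an edge between existing vertices), with surjectivity of all the structure maps supplied by the constant splines. Your spanning-tree ordering and one-step pullback $R_{G_{k-1}}\times_{R/\alpha(e_k)}R_v$ are a slightly tidier packaging than the paper's explicit $n=1,2$ base cases and its two-stage pullback $R_G\times_{R_v}(R_u\times_{R/I_{uv}}R_v)$, but the mathematical content is identical.
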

\begin{proof}
The fact $R_{G}$ is obtained as a product along connected components of $G$ is a basic result in the generalized spline literature, so we only need to prove the claim about the connected components. We proceed inductively. Let $n$ be the number of edges. For $n=1$ we have the spline diagram $R_u \rightarrow R/I_{uv} \leftarrow R_v$. Since the category of commutative rings has all small limits we know the fiber product over the spline diagram exists and is clearly isomorphic to the limit of the diagram, so the claim holds when $n=1$. For $n=2$, we can pullback over each edge diagram, and then pullback to a second level fiber product as in Figure~\ref{fig:second-level-pullback}.

\begin{figure}
\centering
\begin{tikzcd}[row sep=4em, column sep=1.2em]
  &[-0.4em]
  {} &
  \makebox[0pt]{\(\bigl(R_u \times_{R/I_{uv}} R_v\bigr)\!\times_{R_v}\!\bigl(R_v \times_{R/I_{vw}} R_w\bigr)\)}
  \arrow[dl,"\pi_1"', shorten <=2pt, shorten >=2pt]
  \arrow[dr,"\pi_2", shorten <=2pt, shorten >=2pt]
  &[-0.4em] {} \\[1.0em]
  &
  R_u \times_{R/I_{uv}} R_v
  \arrow[dl,"\pi_{u}"']
  \arrow[dr,"\pi_{v1}"]
  &[-0.6em]
  {}
  &[-0.6em]
  R_v \times_{R/I_{vw}} R_w
  \arrow[dl,"\pi_{v2}"']
  \arrow[dr,"\pi_{w}"]
  & \\[0.3em]
  R_u
  \arrow[r]
  & R/I_{uv}
  &[-0.6em]
  R_v
  \arrow[l] \arrow[r]
  &[-0.6em]
  R/I_{vw}
  & R_w
  \arrow[l]
\end{tikzcd}
\caption{Second-level pullback describing simultaneous GKM-conditions for two edges.}
\label{fig:second-level-pullback}
\end{figure}

Notice that elements of the second level fiber product $\bigl(R_u \times_{R/I_{uv}} R_v\bigr)\! \times_{R_v}\! \bigl(R_v \times_{R/I_{vw}} R_w\bigr)$ are of the form $((u,v_1), (v_2, w))$  with $u \in R_u$, $v_1, v_2 \in R_v$ and $w \in R_w$. Because the diagram commutes we can see that:
\begin{align*}
    (\pi_{v1} \circ \pi_1)\bigl((u,v_1), (v_2, w)\bigr)
      &= (\pi_{v2} \circ \pi_2)\bigl((u,v_1), (v_2, w)\bigr) \\
    \pi_{v1}(u,v_1) &= \pi_{v2}(v_2, w) \\
    v_1 &= v_2.
\end{align*}

Furthermore, we can see that the pairs $(u,v) \in R_u \times_{R/I_{uv}} R_v$ are precisely those satisfying the GKM-condition over the left edge diagram, while $(v, w) \in R_v \times_{R/I_{vw}} R_w$ are those satisfying the GKM-condition over the right edge diagram. Thus, the second level fiber product elements $((u, v_1), (v_2, w))$ are precisely the $u, v, w$ which simultaneously satisfy the GKM-condition for both the left edge diagram and right edge diagram. 

Thus, we obtain a morphism of rings $f: R_{G} \rightarrow \bigl(R_u \times_{R/I_{uv}} R_v\bigr)\! \times_{R_v}\! \bigl(R_v \times_{R/I_{vw}} R_w\bigr)$ by mapping $s \in R_{G}$ to $((s(u), s(v)), (s(v), s(w)))$. To see $f$ is an isomorphism, notice that since splines are uniquely identified by how they label vertices the map is one-to-one. Since the $((u, v_1), (v_2, w))$ satisfy the GKM-condition simultaneously, they imply the existence of a spline which maps to $((u, v_1), (v_2, w))$ under $f$, so $f$ is also surjective. Since $f$ is a bijective morphism of rings we have that:

\[
R_{G} \cong \bigl(R_u \times_{R/I_{uv}} R_v\bigr)\! \times_{R_v}\! \bigl(R_v \times_{R/I_{vw}} R_w\bigr).
\]

So we can see that for the base cases for edges $n=1$ and $n=2$, $R_{G}$ is isomorphic to a ring obtained by repeated pullback. We now need to show that each pullback is taken along surjections. It is clear the first level fiber products are pullbacks along surjections, since $R_{i} \rightarrow R/I_{ij}$ and $R_j \rightarrow R/I_{ij}$ are surjective. 

To see the second level fiber product is a pullback along surjections, we need to show $\pi_{v1}$ and $\pi_{v2}$ are surjective. That is, we need to show that for any $a \in R_v$, we can find $(a,a) \in R_u \times_{R/I_{uv}} R_v$ such that $\pi_{v1}((a,a)) = a$ and $(a,a) \in R_v \times_{R/I_{vw}} R_w$ such that $\pi_{v2}((a,a)) = a$. We know $(a,a)$ is in each first level fiber product since the constant spline satisfies the GKM-condition in each case. Therefore $\pi_{v1}$ and $\pi_{v2}$ are indeed surjective, hence the second level fiber product is a pullback along these surjective morphisms. Thus the theorem's claim is also true for $n=2$ edges. This establishes the base cases.

For the inductive step, suppose that for every connected edge-labeled graph $G$ with $n=k$ edges, the associated ring of splines $R_{G}$ is obtained by finitely many pullbacks or equalizers of surjections. We will show each $R_{G'}$ with connected $G'$ having $n=k+1$ edges is also obtained after finitely many pullbacks or equalizers of surjections. To this end, let $G'$ be any connected edge-labeled graph with $n=k+1$ edges.

Since $G'$ is connected with $n=k+1$ edges, notice $G'$ can be obtained by taking a graph $G$ with $n=k$ edges and extending in one of two ways; the first way is to add a new vertex and then connect it to an existing vertex of $G$, and the second way is to add a new edge to $G$ between existing vertices. We will show that in either case, the fact $G$ satisfies the property guarantees that the extension $G'$ must also satisfy the property. We begin with case 1.

\textbf{Case 1:} Let $G'$ be a graph with $n=k+1$ edges obtained by introducing a new vertex to $G$ and adding an edge between it and an existing vertex of $G$, with the new vertex object called $R_u$ and the existing one called $R_v$. We know that $R_{G}$ has a projection onto $R_v$. Thus, we may take the pullback over the edge diagram $R_u \rightarrow R/I_{uv} \leftarrow R_v$, and notice that there is now a common projection onto $R_v$. Now pulling back along these common projections to $R_v$ we obtain the fiber product $R_{G} \times_{R_v} (R_u \times_{R/I_{uv}} R_v)$. The commutative diagram is shown in Figure~\ref{fig:case1-pullback}.

\begin{figure}
\centering
\[
\begin{tikzcd}[ampersand replacement=\&, column sep=large, row sep=large]
  R_{G} \times_{R_v}\!\bigl(R_u \times_{R/I_{uv}} R_v\bigr)
    \arrow[d, "\pi_G"'] \arrow[r, "\pi_{uv}"]
    \arrow[dr, phantom, very near start]
  \& R_u \times_{R/I_{uv}} R_v
    \arrow[d, "\pi_v"'] \arrow[r, "\pi_u"]
    \arrow[dr, phantom, very near start]
  \& R_u \arrow[d, "\alpha"] \\
  R_{G} \arrow[r, "\pi^G_v"']
  \& R_v \arrow[r, "\beta"']
  \& R/I_{uv}
\end{tikzcd}
\]
\caption{Case 1: pullback along the common projection to \(R_v\), yielding \(R_{G} \times_{R_v}\bigl(R_u \times_{R/I_{uv}} R_v\bigr)\).}
\label{fig:case1-pullback}
\end{figure}

We can see $R_{G} \times_{R_v} (R_u \times_{R/I_{uv}} R_v)$ is the limit since the pairs $(u,v) \in R_u \times_{R/I_{uv}} R_v$ selected by the fiber product at the top left of the diagram are those satisfying the GKM-condition for both the new edge diagram and the other edge diagrams that may connect to $R_v$, while projections factoring through $R_{G}$ satisfy all other GKM-conditions. 

For surjectivity notice that for any $a \in R_v$, we can select the constant spline $s \in R_{G}$ which assigns $a$ to every vertex, hence $s$ projects onto $a$ for $a \in R_v$. Since $a \in R_v$ was selected arbitrarily, we can see the projection from $R_{G}$ to $R_v$ is surjective. Similarly, for each $a \in R_v$ we can select $(a,a) \in R_u \times_{R/I_{uv}} R_v$ which projects to $a \in R_v$. Thus, the projection from $R_{G}$ to $R_v$ and from $ R_u \times_{R/I_{uv}} R_v$ to $R_v$ are both surjective.

\textbf{Case 2.} For case 2 we add a new edge between existing vertices, which introduces a new edge diagram $R_u \rightarrow R/I_{uv} \leftarrow R_v$. Now since $R_{G}$ maps to $R_u$ and $R_v$, we obtain a pair of morphisms $R_{G} \;\rightrightarrows\; R/I_{uv}$. Now note that the ring of splines $R_{G'}$ for the extended graph with an additional edge is precisely the equalizer:
\[
R_{G'} \rightarrow R_{G} \;\rightrightarrows\; R/I_{uv}.
\]
Additionally, we can see that $R_{G'}$ is the equalizer of surjections in particular since $R_{G}$ contains the constant splines.

So we can see that for any edge-labeled connected graph $G$ with $n=1$ and $n=2$ edges, the ring of splines $R_{G}$ is obtained after finitely many pullbacks or equalizers of surjections. We can also see that if for every edge-labeled connected graph $G$ with $n=k$ edges the $R_{G}$ are obtained by finitely many pullbacks or equalizers of surjections, then the same is true for every $R_{G'}$ with $G'$ having $n=k+1$ edges. Therefore by the principle of mathematical induction we conclude all rings of splines for connected $G$ are obtained by finitely many pullbacks or equalizers of surjections.
\end{proof}

\section{The scheme-theoretic view}\label{sec:pushouts}
Now that the algebraic preliminaries are in place, we can interpret generalized splines from the perspective of schemes. In particular, our primary goals are to understand the GKM-condition from the view of schemes, to understand the relationship between $\Spec(R)$ and $\Spec(R_G)$, and finally to see the relationship between these two former goals in terms of sections of $\Spec(R_G)$. The last task of the section is to define the natural $R$-scheme structure possessed by $\Spec(R_G)$. Taken together these four objectives will provide a somewhat complete interpretation of generalized splines in terms of schemes.  The next section will collect foundational results.

The GKM-condition which states adjacent vertex labels agree modulo edge ideals translates to the language of graph morphisms of schemes as follows. Let $R_u \rightarrow R/I_{uv} \leftarrow R_v$ be an edge diagram. A pair $(f, g) \in R_u \times_{R/I_{uv}} R_v$ satisfies $f \equiv g \bmod I_{uv}$, which just says the scheme-theoretic evaluations of $f$ and $g$ over the points of the closed subscheme $\Spec(R/I_{uv}) \hookrightarrow \Spec(R)$ agree. That is, we have the commutative diagram for the graph morphisms of $f$ and $g$:

\[
\begin{tikzcd}[ampersand replacement=\&, column sep=large]
  \Spec(R/I_{uv})
    \arrow[r, hook, "e"]
  \&
  \Spec(R)
    \arrow[r, shift left=0.8ex, "f"]
    \arrow[r, shift right=0.8ex, "g"']
  \&
  \Spec(R[y])
\end{tikzcd}
\]

This says the graph morphisms induced by $f$ and $g$ interpolate over the points of $\Spec(R/I_{uv})$. This now justifies the fact that the GKM-condition was precisely formulated to generalize the notion of a spline, which creates a piecewise function from functions that agree over given points. Thus, the GKM-condition when translated to the language of graph morphisms precisely recovers the original notion of piecing together graphs that agree over various knots. We need to be cautious however, since sections of schemes are not determined by their values. That is, because the structure sheaf may encode infinitesimal data, two sections could agree over all points but still not be equal.

Consider an edge-labeled graph over $\R[x]$ with two vertices and one edge which is labeled with $\langle x-1 \rangle$. If we label the vertices with $f$ and $g$ respectively where they agree modulo $\langle x-1 \rangle$, then the images of the graph morphisms and the commutative diagram can be visualized as in Figure~\ref{fig:graph-morphism-basic}.

\begin{figure}
  \centering
  \includegraphics[width=0.95\linewidth]{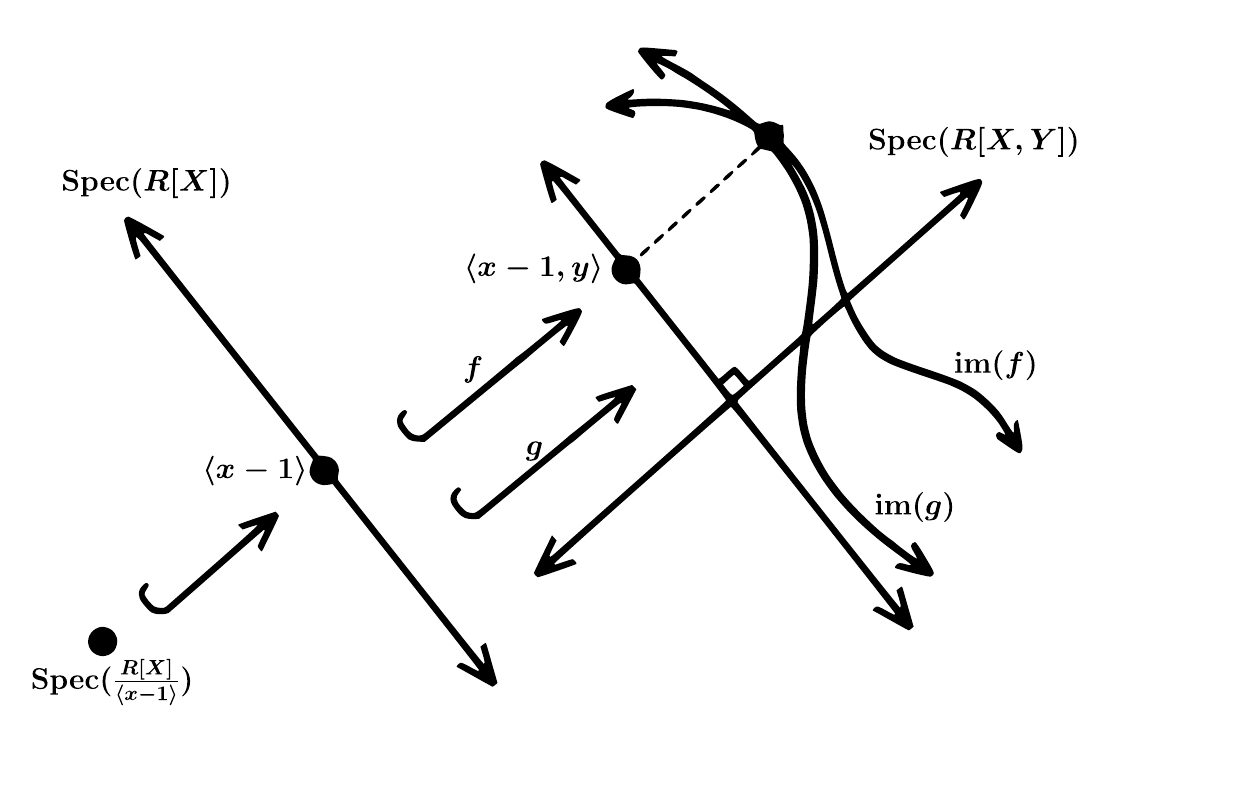}
  \caption{Basic graph morphism diagram.}
  \label{fig:graph-morphism-basic}
\end{figure}

Here the graph morphism $f : \Spec(R) \rightarrow \Spec(R[y])$ becomes $f : \Spec(\R[x]) \rightarrow \Spec(\R[x,y])$ since $R=\R[x]$. We can see that the images of the morphisms $f$ and $g$ agree over $\langle x-1 \rangle$. In the diagram we can also see $\langle x - 1 \rangle$ embedded as a closed subscheme. Indeed, it is geometrically clear that the injection of $\Spec(\R[x] / \langle x - 1 \rangle)$ into $\Spec(\R[x])$ equalizes the two graph morphisms.

If we instead consider a two-vertex edge-labeled graph over $\Z$ with edge label $\langle 3 \rangle$, the same phenomenon shows up as depicted in Figure~\ref{fig:graph-morphisms-arithmetic}. Thus, even in the purely arithmetic case, the GKM-condition can be interpreted in terms of interpolation of graph morphisms over closed subschemes.

\begin{figure}
  \centering
  \includegraphics[width=0.95\linewidth]{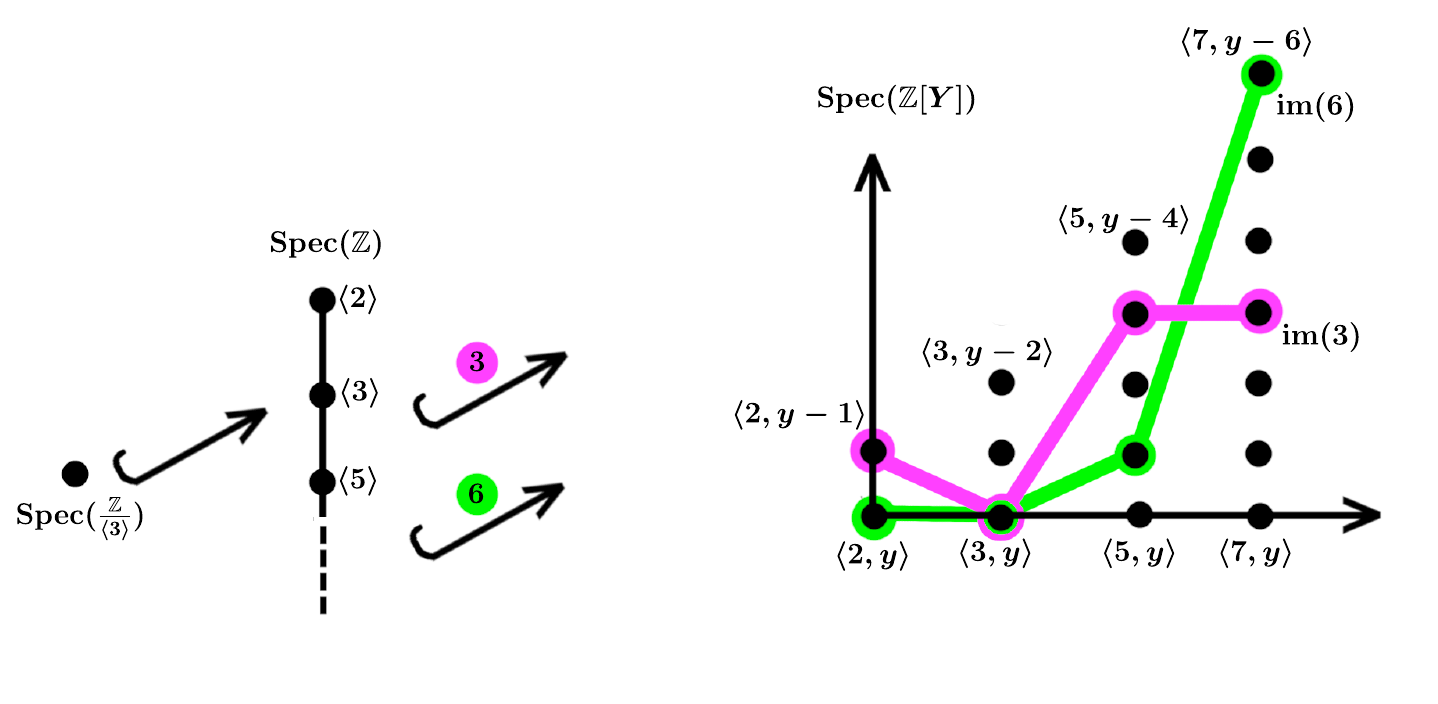}
  \caption{Arithmetic graph morphisms diagram.}
  \label{fig:graph-morphisms-arithmetic}
\end{figure}

A graph spline then is a collection of scheme-theoretic functions which interpolate over the prescribed closed subschemes. It follows that $\Spec(R_{G})$ ought to be some space over which the graph morphisms naturally interpolate according to the GKM-condition. We will now describe $\Spec(R_{G})$ and show that it behaves as expected. Consider the edge diagram:

\[
R_u \rightarrow R/I_{uv} \leftarrow R_v.
\]

Since both morphisms are surjective we have a corresponding pair of closed immersions in the category of schemes:

\[
\Spec(R_u) \leftarrow \Spec(R/I_{uv}) \rightarrow \Spec(R_v)
\]

And we can see the fiber product of the first diagram corresponds to the Ferrand pushout of the second diagram of schemes, which is interpreted geometrically as gluing two copies of $\Spec(R)$ along $\Spec(R/I_{uv})$ since both morphisms correspond to closed immersions. This situation is visualized in Figure~\ref{fig:single-pushout}.

\begin{figure}
  \centering
  \includegraphics[width=\linewidth]{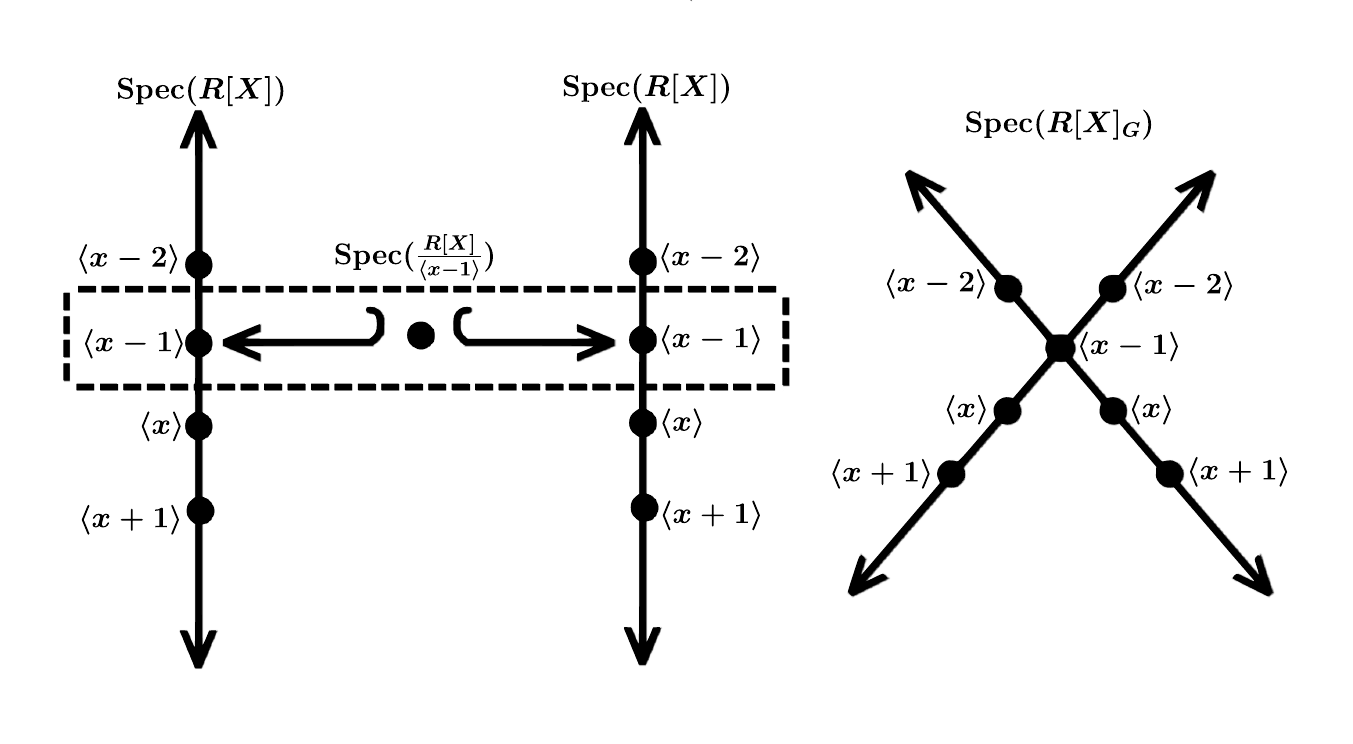}
  \caption{A single pushout diagram. Here $R_u=R_v=R[x]$ and $I_{uv}=\langle x - 1 \rangle$.}
  \label{fig:single-pushout}
\end{figure}

As an additional example consider the edge-labeled graph over $\R[x]$ given in Figure~\ref{fig:rx-cycle}, which consists of a single cycle with edge labels $\langle x \rangle$, $\langle x-1 \rangle$, and $\langle x-2 \rangle$. This corresponds to the gluing situation in Figure~\ref{fig:cycle-pushout}. 
Since graph morphisms are present for sections of any scheme, we have a similar geometric interpretation for the following arithmetic situation. Consider the edge-labeled graph over $\Z$ with edge ideals $\langle 3 \rangle$, $\langle 5 \rangle$, and $\langle 7 \rangle$ shown in Figure~\ref{fig:z-cycle}. This corresponds to the gluing situation in Figure~\ref{fig:integer-cycle}.

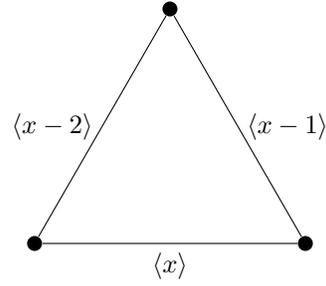
\begin{figure}
\centering
\begin{tikzpicture}[scale=1.2,
  vertex/.style={circle,fill=black,inner sep=2pt}] 
  \node[vertex] (A) at (0,0) {};
  \node[vertex] (B) at (3,0) {};
  \node[vertex] (C) at (1.5,2.6) {};
  \draw (A) -- node[below] {$\langle x\rangle$} (B);
  \draw (B) -- node[right] {$\langle x-1\rangle$} (C);
  \draw (C) -- node[left]  {$\langle x-2\rangle$} (A);
\end{tikzpicture}
\caption{A cycle in $\R[x]$ with edge ideals $\langle x\rangle,\langle x-1\rangle,\langle x-2\rangle$.}
\label{fig:rx-cycle}
\end{figure}

\begin{figure}
  \centering
  \includegraphics[width=0.95\linewidth]{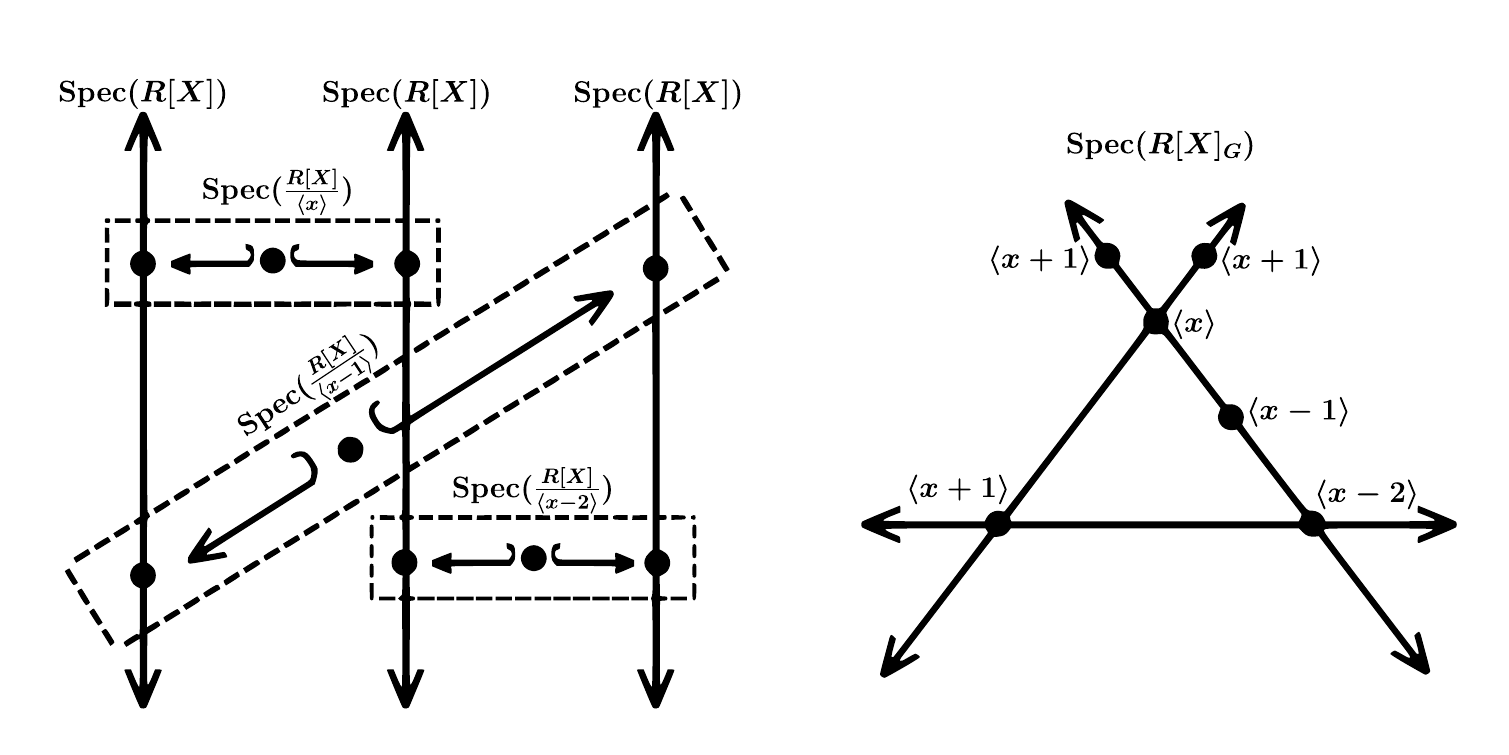}
  \caption{A cycle pushout diagram.}
  \label{fig:cycle-pushout}
\end{figure}

\begin{figure}
\centering
\begin{tikzpicture}[scale=1.2,
  vertex/.style={circle,fill=black,inner sep=2pt}]  
  \node[vertex] (A) at (0,0) {};
  \node[vertex] (B) at (3,0) {};
  \node[vertex] (C) at (1.5,2.6) {};
  \draw (A) -- node[below] {$\langle 3 \rangle$} (B);
  \draw (B) -- node[right] {$\langle 5 \rangle$} (C);
  \draw (C) -- node[left]  {$\langle 7 \rangle$} (A);
\end{tikzpicture}
\caption{An integer-labeled cycle over $\Z$.}
\label{fig:z-cycle}
\end{figure}
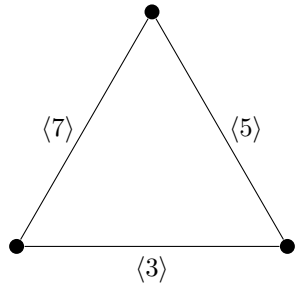

\begin{figure}
  \centering
  \includegraphics[width=0.95\linewidth]{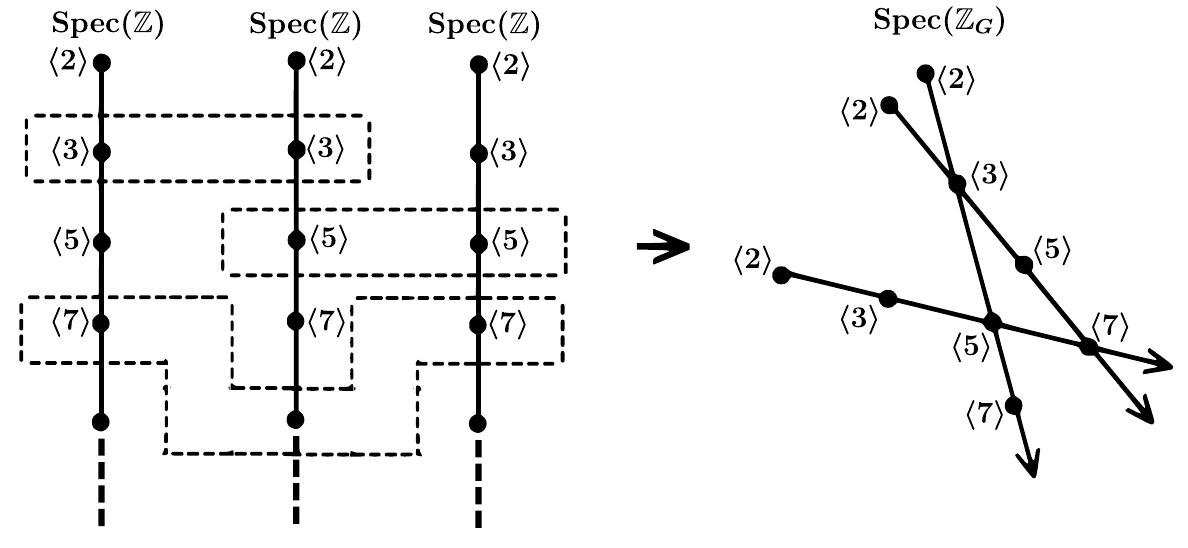}
  \caption{An integer cycle diagram.}
  \label{fig:integer-cycle}
\end{figure}

Of course, we need to justify that this gluing process does not, at any point, fail due to some pathology. It turns out that it does not: this is precisely the scheme-theoretic version of Theorem~\ref{thm:iterated-pullback}. Because the ring of splines is obtained by finitely many pullbacks or equalizers of surjections, the spectrum of the ring of splines is obtained by finitely many gluing operations along closed immersions of schemes. Thus, we can indeed interpret:

\[
\Spec(R_u) \leftarrow \Spec(R/I_{uv}) \rightarrow \Spec(R_v) \leftarrow \Spec(R/I_{vw}) \rightarrow \Spec(R_w)
\]

As being obtained by gluing three copies of $\Spec(R)$ along the closed immersions induced by the edge labeling. The proof of Theorem~\ref{thm:iterated-pullback}, in the base case for $n=2$ edges says that we take four copies of $\Spec(R)$, glue according to the GKM-condition, and then two of the four copies of $\Spec(R)$ are glued into one copy. That is, we once again consider Figure~\ref{fig:second-level-pullback}. On the scheme side, we have the diagram in Figure~\ref{fig:schemes-second-level-pushout}.

\begin{figure}
\centering
\begin{tikzcd}[row sep=4em, column sep=1.2em]
  &[-0.4em] {} &
  \makebox[0pt]{\(\Spec\!\Bigl((R_u \times_{R/I_{uv}} R_v)\!\times_{R_v}\!(R_v \times_{R/I_{vw}} R_w)\Bigr)\)}
  &[-0.4em] {} & \\[1.0em]
  &
  \Spec\!\bigl(R_u \times_{R/I_{uv}} R_v\bigr)
  \arrow[ur,"\Spec(\pi_1)", shorten <=2pt, shorten >=2pt]
  &[-0.6em] {} &[-0.6em]
  \Spec\!\bigl(R_v \times_{R/I_{vw}} R_w\bigr)
  \arrow[ul,"\Spec(\pi_2)"', shorten <=2pt, shorten >=2pt]
  & \\[0.3em]
  \Spec(R_u)
  \arrow[ur,"\Spec(\pi_{u})", shorten <=2pt, shorten >=2pt]
  &
  \Spec(R/I_{uv})
  \arrow[l] \arrow[r]
  &[-0.6em]
  \Spec(R_v)
  \arrow[ul,"\Spec(\pi_{v1})"', shorten <=2pt, shorten >=2pt]
  \arrow[ur,"\Spec(\pi_{v2})", shorten <=2pt, shorten >=2pt]
  \arrow[l] \arrow[r]
  &[-0.6em]
  \Spec(R/I_{vw})
  \arrow[l] \arrow[r]
  &
  \Spec(R_w)
  \arrow[ul,"\Spec(\pi_{w})", shorten <=2pt, shorten >=2pt]
\end{tikzcd}
\caption{Dual picture (schemes): the second-level pushout corresponding to the ring pullback in~\ref{fig:second-level-pullback}.}
\label{fig:schemes-second-level-pushout}
\end{figure}
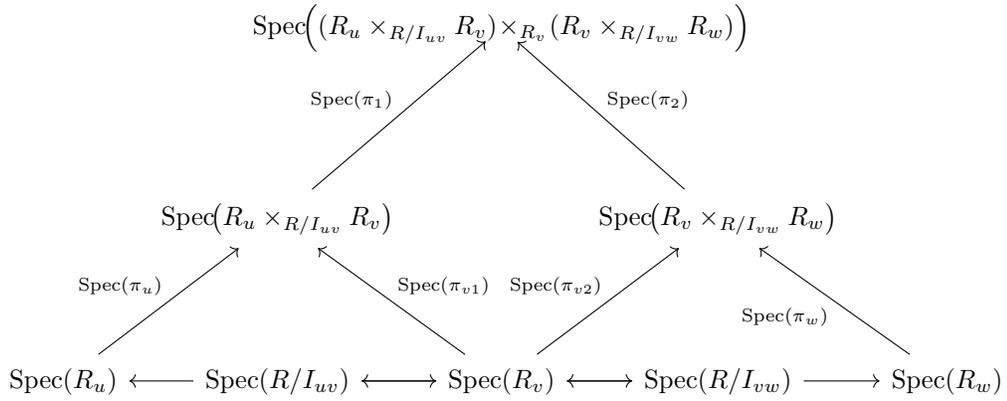

Notice that the first pushouts result in two separate copies of $\Spec(R_v)$, whereas the final pushout is along $\Spec(R_v)$ which effectively glues the two copies together into one. Concretely, suppose we have the edge-labeled graph over $\R[x]$ in Figure~\ref{fig:rx-path}, which has ring of splines $R[x]_{G}$. This translates to the spline diagram of schemes:

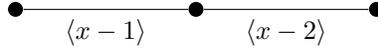
\begin{figure}
\centering
\begin{tikzpicture}[scale=1.2,
  vertex/.style={circle,fill=black,inner sep=2pt}]
  \node[vertex] (A) at (0,0) {};
  \node[vertex] (B) at (2,0) {};
  \node[vertex] (C) at (4,0) {};
  \draw (A) -- node[below] {$\langle x-1\rangle$} (B);
  \draw (B) -- node[below] {$\langle x-2\rangle$} (C);
\end{tikzpicture}
\caption{A path of two edges over $\R[x]$ with ideals $\langle x-1\rangle$ and $\langle x-2\rangle$.}
\label{fig:rx-path}
\end{figure}

\[
\Spec(\R[x]) \leftarrow \Spec(\R[x]/\langle x-1 \rangle) \rightarrow \Spec(\R[x]) \leftarrow \Spec(\R[x]/\langle x-2\rangle) \rightarrow \Spec(\R[x])
\]

where the colimit of the diagram, obtained by successive gluing of schemes, corresponds to $\Spec(\R[x]_{G})$. The two-step gluing process represented by base case 2 of Theorem~\ref{thm:iterated-pullback} will start with four copies of $\Spec(\R[x])$, glue into two pairs, and then glue along the duplicated $\Spec(\R[x])$. The gluing process is depicted in Figure~\ref{fig:base-case-2}, where the subscripts $u$, $v$, and $w$ indicate which object of the spline diagram is in play.

\begin{figure}
  \centering
  \includegraphics[width=1\linewidth]{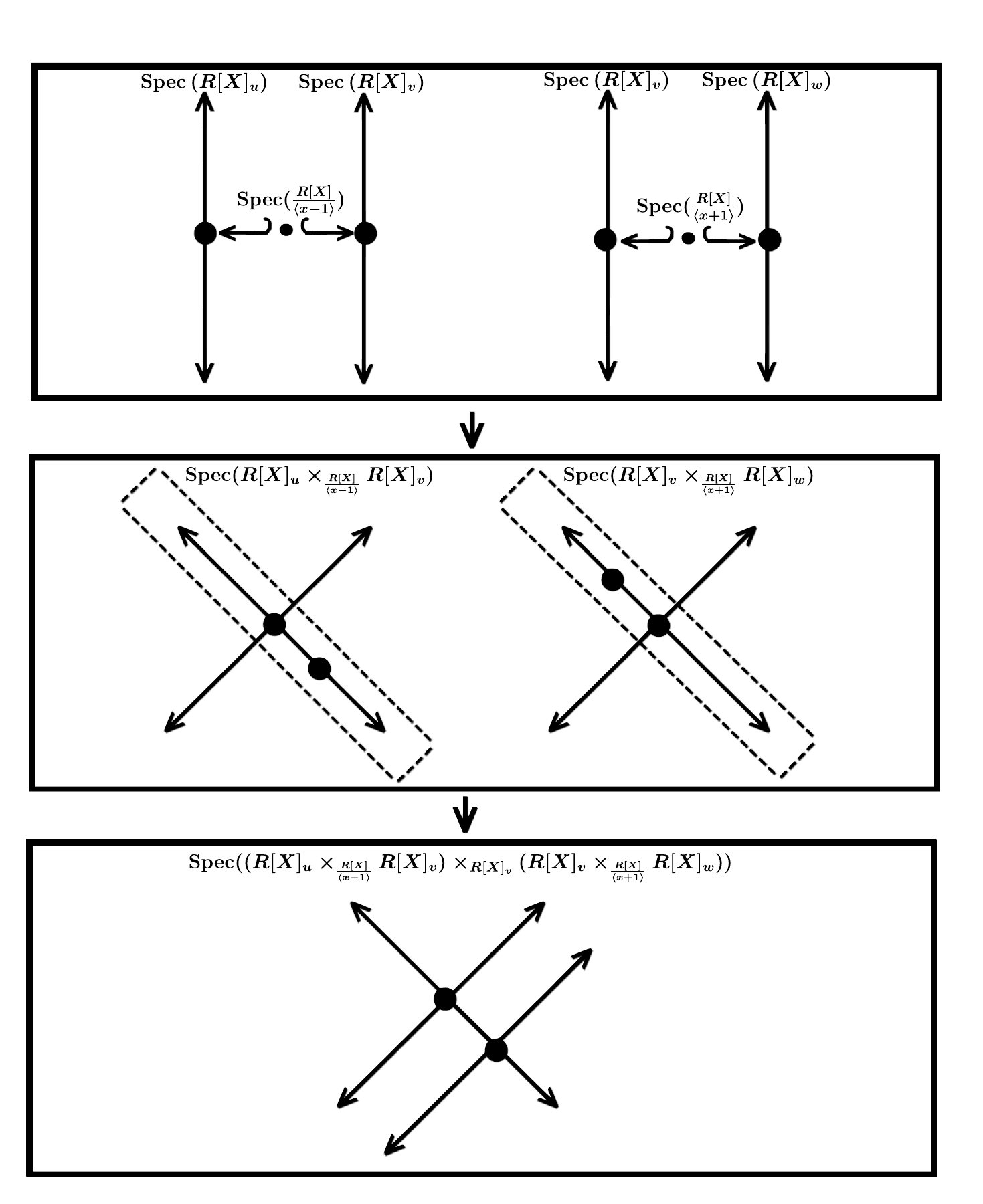}
  \caption{Base case 2 diagram.}
  \label{fig:base-case-2}
\end{figure}

In inductive case 1 which appears in the proof of Theorem~\ref{thm:iterated-pullback}, we can interpret the commutative diagram in Figure~\ref{fig:case1-pullback} as a two-step gluing process. This is visualized in Figure~\ref{fig:inductive-case1} where we have a cycle similar to the one in Figure~\ref{fig:cycle-pushout} which has been extended to include a new vertex.
 
\begin{figure}
  \centering
  \includegraphics[width=0.95\linewidth]{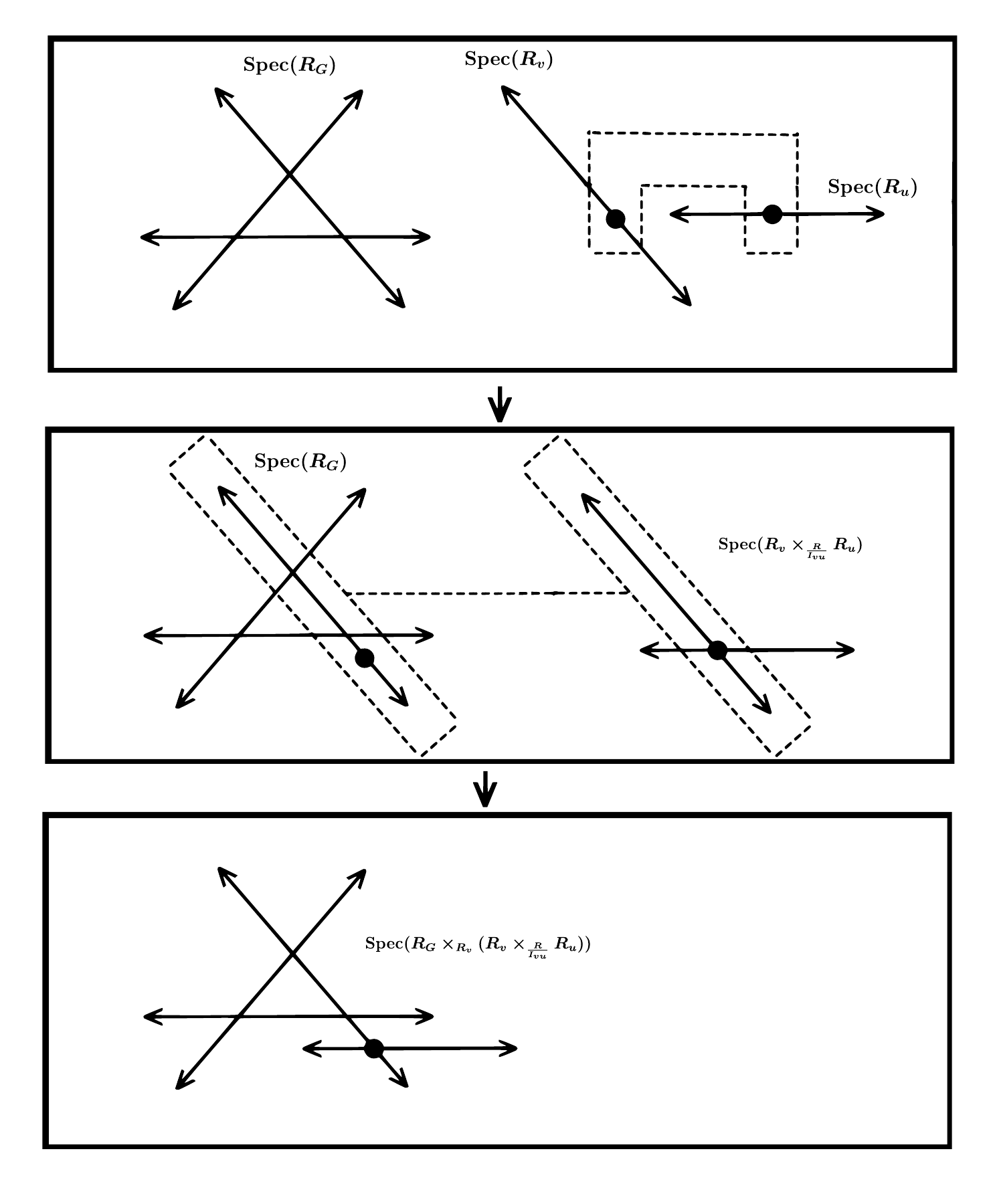}
  \caption{Inductive case 1 diagram.}
  \label{fig:inductive-case1}
\end{figure}

Thus, adding a new vertex and edge to an existing vertex is tantamount to gluing a new copy of $\Spec(R)$ (corresponding to the new vertex) to $\Spec(R_{G})$ at the existing $\Spec(R)$ determined by the existing vertex, at the point determined by the new edge. 

In inductive case 2 where an edge was added between existing vertices, this should correspond to existing copies of $\Spec(R)$ contained in $\Spec(R_{G})$ being glued together along a new closed subscheme $\Spec(R/I)$. Indeed, algebraically we took the equalizer \(
R_{G}' \rightarrow R_{G} \;\rightrightarrows\; R/I_{uv}\), which on the scheme side corresponds to the coequalizer 

\[
\Spec(R_{G}') \leftarrow \Spec(R_{G}) \;\leftleftarrows\; \Spec(R/I_{uv}),
\]

Here a single $\Spec(R_{G})$ is fixed, and the coequalizer must ensure both images of $\Spec(R/I_{uv})$ inside $\Spec(R_{G})$ are identified. And in particular, we know $\Spec(R/I_{uv})$ will inject into the copies of $\Spec(R_u)$ and $\Spec(R_v)$ contained in $\Spec(R_{G})$. Thus, in case 2 two existing copies of $\Spec(R)$ inside of $\Spec(R_{G})$ are glued. 

This aligns with the view that each vertex of an edge-labeled graph corresponds to a copy of $\Spec(R)$, and that the edges describe how those copies of $\Spec(R)$ glue. In summary, both case 1 and case 2 in the proof of Theorem~\ref{thm:iterated-pullback} have a natural geometric explanation.

So far, we have translated the algebraic nature of the base and inductive cases of Theorem~\ref{thm:iterated-pullback} into geometric language, and shown it matches the intuitive idea of gluing one copy of $\Spec(R)$ for each vertex of an edge-labeled graph along the closed subschemes determined by the edges. The algebra was needed to prove the intuitive perspective is pathology free insofar as working with affine schemes is concerned.

The sections of $\Spec(R_{G})$ are now sections along the glued up copies of $\Spec(R)$, where each vertex label $s(u)$ lives over the copy of $\Spec(R_u)$ it is associated with; for adjacent $s(v)$ over $\Spec(R_v)$, the graphs of $s(u)$ and $s(v)$ agree over the points of $\Spec(R/I_{uv})$ which both glued copies of $\Spec(R)$ now share. Therefore we can visualize sections of $\Spec(R_{G})$ by adding a $y$-axis along the geometric picture of $\Spec(R_{G})$ and plotting the graphs over each $\Spec(R)$, taking care to ensure they agree over the places the $\Spec(R)$ are glued up. If $\Spec(R_{G})$ is two glued up copies of the affine real line we can visualize a graph morphism induced by an element of $R_{G}$ as shown in Figure~\ref{fig:RG-graph-morphism}.

\begin{figure}
  \centering
  \includegraphics[width=1\linewidth]{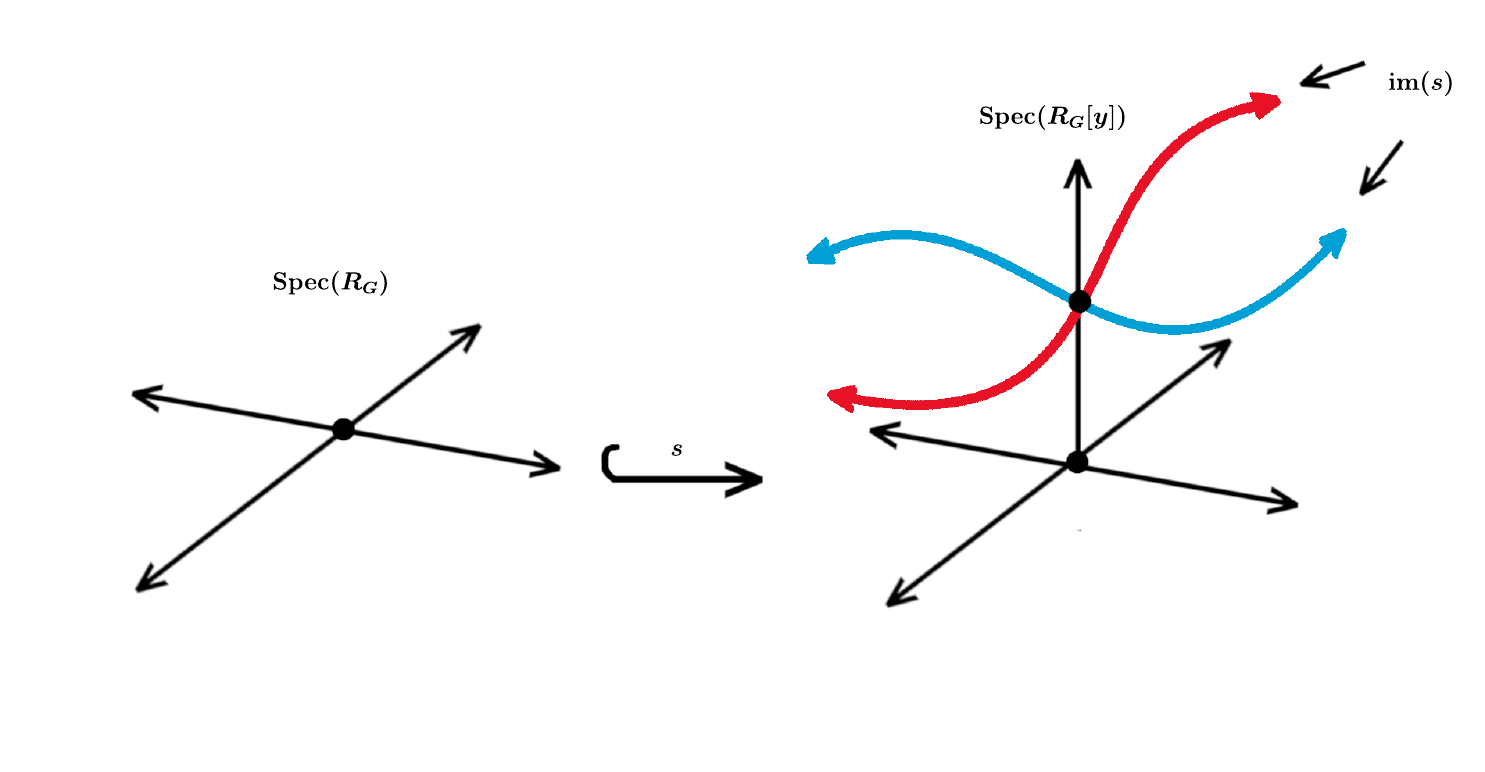}
  \caption{Graph morphism diagram for \(R_{G}\).}
  \label{fig:RG-graph-morphism}
\end{figure}

That is, if we assume $R=\R[x]$ and that $G$ is a graph with one edge-labeled by some $\langle x - a \rangle$, for the codomain of the graph morphism we have two $xy$-planes intersecting along a vertical line; the graph of a spline $s$ corresponds to a curve over the first $xy$-plane, and a curve over the second, such that they agree where the two planes meet. The space the graph lives in should not be confused with an $xyz$-coordinate space, however, because points only live on the two planes and their shared line of intersection, but do not exist off the planes.

In summary, we first interpreted the GKM-condition as equalizers of graph morphisms. The algebraic perspective given by ideals is complemented by this interpretation, since it shows the intuitive notion of piecewise functions agreeing along prescribed points is directly generalized geometrically, not just algebraically. Second, we showed that $\Spec(R_G)$ is obtained by gluing copies of $\Spec(R)$ along closed subschemes determined by the edge labels. Finally, we used these first two observations to understand the graph morphisms attached to sections of $\Spec(R_G)$. We conclude by giving $R_G$ the structure of an $R$-scheme.

\begin{definition}  
Let $R$ be any commutative ring, $G$ an edge-labeled graph, and $R_{G}$ the associated ring of splines. Let $\Spec(R_{G})$ be given an $R$-scheme structure induced by the diagonal embedding $\Delta : R \rightarrow R_G$ mapping each element of $R$ to the constant spline. We write $G_\Delta$ to denote the \textbf{spline structure morphism} which gives $\Spec(R_G)$ the structure of an $R$-scheme.
\end{definition}

\section{Foundational results}
In this section we will collect foundational results needed to work with spline structure morphisms. First we will consider a functor $F_S(-) = - \otimes_R S$ with $S$ flat, and define an edge-labeled graph $F_S(G)$ by first applying $F_S$ to the spline diagram of rings $\mathfrak G$, and then building $F_S(G)$ from that. Next, we will establish the stability of $\Spec(R_G)$ as a colimit of a spline diagram of schemes under flat base change. The flat base change result  essentially asserts that both of the following
\[ 
F_S(G_\Delta): \Spec(F_S(R_G)) \rightarrow  \Spec(F_S(R)) 
\]
\[
F_S(G)_\Delta: \Spec(F_S(R)_{F_S(G)}) \rightarrow \Spec(F_S(R))
\]
are equivalent. Here $F_S(R)_{F_S(G)}$ enjoys the full privileges of a ring of generalized splines over $F_S(R)$ with edge-labeled graph $F_S(G)$, while $F_S(R_G)$ enjoys status as a flat base change of $R_G$. Thus, while we could always study a ring of splines after base change, the base change is now upgraded to a full ring of splines. Effectively, flat base change induces a transformation of spline structure morphisms. The last of the three basic techniques we will examine is that of lifting graphs. Essentially, given an edge-labeled graph $G$, we want to find $G'$ such that $F_S(G')=G$. Taken together, these results will power the remaining proofs in the paper.

\begin{lemma}
\label{lma:graph-pushforward}
Let $R$ be a commutative ring, $G$ an edge-labeled graph, and $\mathfrak G$ the associated spline diagram of rings. Suppose $F_S(-) = - \otimes_R S$ with $S$ flat. Then there exists an edge-labeled graph denoted $F_S(G)$ over $F_S(R)$ with associated spline diagram of rings $F_S(\mathfrak G)$.
\end{lemma}
\begin{proof}
Let $R$, $G$, $\mathfrak G$, and $F_S$ be given as specified. Since $F_S$ is a functor we know $F_S(\mathfrak G)$ acts on edge diagrams as follows:

\[
R_u \rightarrow R_u/I_{uv} \leftarrow R_v
\]
\[
F_S(R_u) \rightarrow F_S(R/I_{uv}) \leftarrow F_S(R_v).
\]
Now in particular, since $F_S$ is given by tensor product with $S$ flat we have the following exact sequences of $R$-modules

\[
0 \rightarrow I_{uv} \rightarrow R_u \rightarrow R/I_{uv} \rightarrow 0
\]
\[
0 \rightarrow F_S(I_{uv}) \rightarrow F_S(R_u) \rightarrow F_S(R/I_{uv}) \rightarrow 0.
\]

Hence each edge diagram under $F_S$ is of the form

\[
F_S(R_u) \rightarrow F_S(R)/F_S(I_{uv}) \leftarrow F_S(R_v) 
\]

Now let $G$ be the original edge-labeled graph, and take $R$ to $F_S(R)$ and $F_S(G)$ to the edge-labeled graph obtained by replacing each edge ideal $I_{uv}$ with $F(I_{uv})$. Then by the previous observation, we can see $F_S(G)$ has $F_S(\mathfrak G)$ as its associated ring of limit splines. 
\end{proof}

\begin{definition}
Let $F$ be a functor, $R$ a commutative ring, $G$ an edge-labeled graph, and $R_G$ the ring of splines. If the following are equivalent:

\[ 
F(G_\Delta) : \Spec(F(R_G)) \rightarrow \Spec(F(R))
\]

\[
F(G)_\Delta : \Spec(F(R)_{F(G)}) \rightarrow \Spec(F(R))
\]

then we say $F$ preserves (or commutes with) spline structure morphisms.
\end{definition}

\begin{definition}
Let $F$ be a functor of commutative rings which also preserves module freeness. Suppose $G'$ is an edge-labeled graph over $F(R)$ with ring of generalized splines $F(R)_{G'}$. Then any edge-labeled graph $G$ over $R$ is said to be a lift of $G'$ if $F(G)=G'$. If $R_G$ is free, then $G'$ is said to be \textbf{free-liftable}. 
\end{definition}

\begin{lemma}
\label{lma:free-lift}
Free-liftable implies free.
\end{lemma}
\begin{proof}
The definition of free-liftable along a functor has the preservation of freeness as a hypothesis. The claim follows.
\end{proof}

For the remainder of the paper, we will focus on the functors of the form $F_S(-) = - \otimes_R S$. The following lemma will be used to prove stability under flat base change to a standard open. Note that we write $R_X$, $R_Y$, and $R_{XY}$ instead of the usual $R_u$, $R_v$, and $R_{uv}$. This is because the rings involved may come from equalizer or pullback steps, and therefore not correspond to a vertex or edge object within the spline diagram of rings.

\begin{lemma}
\label{lma:gluing-lemma}
$\Spec(R_{G})$ is obtained by finitely many Ferrand pushouts or coequalizers of closed immersions.
\end{lemma}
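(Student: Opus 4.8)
The plan is to obtain this as the scheme-theoretic dual of Theorem~\ref{thm:iterated-pullback}. That theorem says $R_G$ is built from copies of $R$ (and the quotients $R/I_{uv}$) by finitely many pullbacks and equalizers, each taken along surjective ring maps. The key observation is that $\Spec(-)$ is a contravariant functor from commutative rings to affine schemes which, restricted to this situation, turns the relevant limits into colimits: a pullback $A\times_C B$ of rings along surjections $A\twoheadrightarrow C$, $B\twoheadrightarrow C$ becomes the pushout of schemes $\Spec A \leftarrow \Spec C \rightarrow \Spec B$, and an equalizer of a pair $A\rightrightarrows C$ with $A\twoheadrightarrow C$ surjective becomes the coequalizer $\Spec A \leftleftarrows \Spec C$. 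These are exactly the Ferrand pushouts and coequalizers along closed immersions, since a surjection $R\twoheadrightarrow R/I$ induces the closed immersion $\Spec(R/I)\hookrightarrow \Spec R$.

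First I would recall the precise statement of Ferrand's gluing theorem (or Schwede's amalgamated sum construction): given a pair of closed immersions $\Spec C \hookrightarrow \Spec A$ and $\Spec C \hookrightarrow \Spec B$, the pushout in the category of schemes exists and is the affine scheme $\Spec(A\times_C B)$; moreover the two natural maps from $\Spec A$ and $\Spec B$ into it are again closed immersions, and the square is also a pullback of schemes. The analogous statement for coequalizers of $A \rightrightarrows C$ along a surjection gives $\Spec(\mathrm{eq}(A\rightrightarrows C))$ with $\Spec A$ mapping in by a closed immersion. I would cite these rather than reprove them.

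Then the proof is a short induction mirroring the proof of Theorem~\ref{thm:iterated-pullback}. By that theorem, for connected $G$ we have a finite chain $R^{(0)}, R^{(1)}, \dots, R^{(m)} = R_G$ where $R^{(0)}$ is a fiber product over a single edge diagram and each $R^{(i+1)}$ is a pullback of $R^{(i)}$ with some $R_u \times_{R/I_{uv}} R_v$ over a common $R_v$ (inductive case 1) or an equalizer of $R^{(i)} \rightrightarrows R/I_{uv}$ (inductive case 2), always along surjections. Applying $\Spec$ to each stage and invoking the cited gluing results, each $\Spec(R^{(i+1)})$ is obtained from $\Spec(R^{(i)})$ and a further single-edge pushout scheme by one Ferrand pushout along closed immersions, or from $\Spec(R^{(i)})$ by one coequalizer along a closed immersion; the base case $\Spec(R^{(0)}) = \Spec(R_u \times_{R/I_{uv}} R_v)$ is itself a single Ferrand pushout of $\Spec R \leftarrow \Spec(R/I_{uv}) \rightarrow \Spec R$. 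For disconnected $G$, $R_G$ is a finite product of the $R_{G'}$ over connected components, so $\Spec(R_G)$ is the finite disjoint union (coproduct) of the $\Spec(R_{G'})$; one may either fold this into the statement or note that a finite coproduct of affine schemes is itself presented by no gluing at all.

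The main obstacle is not the induction — which is essentially bookkeeping — but making sure the hypotheses of the Ferrand-style gluing theorem are actually met at every stage, namely that at each step the maps along which we glue are genuinely \emph{closed immersions} (equivalently, that the ring maps are surjective). Theorem~\ref{thm:iterated-pullback} was specifically proved to guarantee surjectivity of every structure map used (the projections onto $R_v$ are surjective because constant splines exist, and $R_G\twoheadrightarrow R/I_{uv}$ because $R_G$ contains the constants), so this is already in hand; the only care needed is to check that the newly-formed map $\Spec R^{(i+1)} \to \Spec R^{(i)}$ (or the map from the glued-in copy) remains a closed immersion so that the \emph{next} gluing step is still of the required type — which is exactly the "the two maps into the pushout are again closed immersions" clause of the gluing theorem, and the corresponding clause for coequalizers. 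With that clause cited, the induction closes cleanly.
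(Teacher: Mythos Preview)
Your proposal is correct and follows essentially the same strategy as the paper: dualize Theorem~\ref{thm:iterated-pullback} via $\Spec(-)$, so that the surjections become closed immersions and the iterated pullbacks/equalizers become iterated pushouts/coequalizers along those closed immersions. The only difference in emphasis is that the paper spends its effort verifying the specific Ferrand hypotheses as formulated in the Stacks Project (one leg a closed immersion, the other finite integral, plus a fiber condition), whereas you invoke the simpler Schwede-style statement where both legs are closed immersions --- either works here, since both legs are indeed closed immersions throughout.
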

\begin{proof}

We already showed that the colimit $\Spec(R_{G})$ is obtained by finitely many pushouts or coequalizers of closed immersions, since these are the dual statements of Theorem~\ref{thm:iterated-pullback}. It remains to show the pushouts are Ferrand. In particular, for any pushout along $\Spec(R_{XY})$ we need to show $i: \Spec(R_{XY}) \rightarrow \Spec(R_X)$ is a closed immersion, that $j: \Spec(R_{XY}) \rightarrow \Spec(R_Y)$ is finite integral, and that for $y \in \Spec(R_Y)$, $j^{-1}(\{y\}) \subseteq i^{-1}(U)$ for some open $U \subseteq \Spec(R_X)$ \cite{StacksPushoutsII}.

We know that both morphisms for the pushout are closed immersions, which are known to be finite and integral per Stacks Project Lemma 29.44.12 \cite{StacksIntegralFinite}. Therefore it remains to verify the pullback property. Since $\Spec(R_{XY})$ sits inside both $\Spec(R_X)$ and $\Spec(R_Y)$ we can select any open $U$ in $\Spec(R_X)$ containing $i(j^{-1}(y))$ and we obtain that $j^{-1}({y}) \subseteq i^{-1}(U)$ as desired.
\end{proof}

We need the next definition to keep notation in the main theorem and subsequent applications under control.

 \begin{definition}
Let $R$ be a commutative ring with edge-labeled graph $G$. For any standard open set $U \subseteq \Spec(R)$, we write $R_U := \mathcal{O}_{\Spec(R)}(U)$ for the localized base ring on $U$, and we write $G_U$ for the restriction of $G$ to $U$ (obtained by applying the base change functor $-\otimes_R R_U$ to the spline diagram of $G$). If $U$ is a principal open $D(f)$, we also write $R_f$ and $G_f$. The ring of splines of $G_U$ over $R_U$ will be written $(R_U)_{G_U}$. For a principal open $D(f)$ this is $(R_f)_{G_f}$.
\end{definition}

We now obtain the following consequence of the lemma and Theorem~\ref{thm:iterated-pullback}:

\begin{theorem}[Main Theorem]
\label{thm:flat-base-change} 
Flat base change with affine schemes commutes with spline structure morphisms. In particular, suppose $R$ is a commutative ring, $G$ an edge-labeled graph, and $R_G$ the ring of splines. If $\mathfrak p$ is a prime ideal of $R$, then $(R_G)_{\mathfrak p} \cong (R_{\mathfrak p})_{G_{\mathfrak p}}$, and if $f \in R$, then $(R_G)_f \cong (R_f)_{G_f}$. 
\end{theorem}
\begin{proof} 
Ferrand pushout is known to be stable under flat base change. Since $\Spec(R_{G})$ is both the colimit of the spline diagram $\mathfrak G$ and is also obtained by successive Ferrand pushouts or coequalizers of closed immersions (by the lemma), we can see the flat base change of $\Spec(R_{G})$ is the colimit of the flat base change of $\mathfrak G$. This is because each of the Ferrand pushouts and coequalizers used to obtain $\Spec(R_{G})$ are stable under flat base change, so the stability propagates along each of the finite steps to $\Spec(R_{G})$. Provided the base remains affine, then the spline diagram is still of affine schemes and there is an associated edge-labeled graph \footnote{The inductive argument used in Theorem~\ref{thm:iterated-pullback} which underlies this result mirrors Euler's proof that the Euler characteristic of a graph is exactly 2; incidentally, a corollary of Euler's proof is that there are only five platonic solids.}. But that just says flat base change with an affine scheme commutes with spline structure morphisms as desired.
\end{proof}

We now obtain two corollaries that power the local--global results in the applications section. That is, the results in the applications section work by ensuring the criteria of the two corollaries are met.

\begin{corollary}
\label{cor:locally-free}
Let $R$ be a commutative ring, $G$ an edge-labeled graph, and $R_G$ the ring of splines. Let $\{U_f\}$ be a standard open cover of $\Spec(R)$. If each $(R_f)_{G_f}$ is free, then $G$ is said to be \textbf{locally free}. Similarly, if $(R_{\mathfrak p})_{G_{\mathfrak p}}$ is free for each prime ideal $\mathfrak p$ of $R$, then $G$ is also said to be locally free. $R_G$ is locally free if and only if $G$ is locally free. If $R$ is a PID or of the form $A[x_1,\dots,x_n]$ with $A$ a PID or field, then $R_G$ is also free.
\end{corollary}
\begin{proof}
The main theorem implies that $(R_G)_f \cong (R_f)_{G_f}$ as $R_f$-modules, and similarly if we localize at a prime ideal $\mathfrak p$, we have $(R_G)_{\mathfrak p} \cong (R_{\mathfrak p})_{G_{\mathfrak p}}$ as $R_{\mathfrak p}$-modules. Since the latter of each isomorphism is free by hypothesis, so is the former, and therefore $R_G$ is a locally free $R$-module as claimed.
\end{proof}

The corollary captures a critical but easy to overlook consequence of the main theorem. Commutation with spline structure morphisms means that the combinatorial data used to build a ring of splines survives flat base change and continues to fully control the ring of splines. Thus, in order to study a ring of splines, \textbf{it suffices to study the localized combinatorial data, which is often much simpler than the global combinatorial data}.

\begin{corollary}
\label{cor:globally-free-implies-locally-free}
Let $R$ be a commutative ring, $G$ an edge-labeled graph, $R_G$ the ring of splines, and $U$ a standard open. 

\begin{itemize}
\item If $R_G$ is a free $R$-module, then $(R_U)_{G_U}$ is a free $R_U$-module; Conversely, if $(R_U)_{G_U}$ is not a free $R_U$-module, then $R_G$ is not free. 

\item If $R_G$ is a free $R$-module and $\mathfrak p$ is any prime ideal of $R$, then $(R_{\mathfrak p})_{G_{\mathfrak p}}$ is a free $R_{\mathfrak p}$-module; Conversely, if $(R_{\mathfrak p})_{G_{\mathfrak p}}$ is not a free $R_{\mathfrak p}$-module, then $R_G$ is not free.
\end{itemize}
\end{corollary}

\begin{proof}
Let $R$ be a commutative ring, $G$ an edge-labeled graph, and $R_G$ a free $R$-module. The converses follow automatically from the first part of each statement, so we only need to prove the first part of each claim. Since flat base change preserves freeness, $(R_G)_U$ is free, and since flat base change preserves spline structure morphisms, we have $(R_G)_U \cong (R_U)_{G_U}$, which implies $(R_U)_{G_U}$ is free as claimed. Notice the second claim is also about a flat base change, so the second claim follows for the same reason as the first. 
\end{proof}

The first part of Corollary~\ref{cor:globally-free-implies-locally-free} means that if we lose access to a combinatorial result on a standard open set, we can lift the localized (and often simpler) combinatorial data to the original ring in order to establish freeness locally. For example, suppose we have a polynomial ring $R$ with edge-labeled graph $G$; Inversion of even a single element $f$ of $R$ means the local edge-labeled graph $G_f$ is no longer an edge-labeled graph for a ring of polynomials. However, we may be able to lift suitable subgraphs of $G_f$ back to $R$ itself where results about polynomial rings do apply. Together, \textbf{Corollary~\ref{cor:locally-free} means we can obtain global results by patching together simpler localized combinatorial data}, while \textbf{Corollary~\ref{cor:globally-free-implies-locally-free} allows the direct transfer of combinatorial data from the global picture of an edge-labeled graph to the local pictures}.

The converses in Corollary~\ref{cor:globally-free-implies-locally-free} should be used with caution. They can only be used ``downstairs'' where the original ring has been tampered with, and suffers from the fact non-free modules can suddenly become free when operated on. Despite these limitations, the consequence is that \textbf{if any local combinatorial data precludes freeness even once, freeness is precluded globally}.

Before we give an example to the theorem, we will introduce a definition that describes behavior that shows up naturally when working locally, but is largely irrelevant in the global context. Note that historically, edge-labeled graphs do not include edges labeled with $R$, because this imposes no non-trivial GKM-constraints and so can be ignored. It turns out edges with $R$ do show up in the local case naturally, and effectively correspond to certain edges vanishing locally.

\begin{definition}
Let $R$ be a commutative ring with edge labeled-graph $G$, and consider a subgraph $H \subseteq G$. Let $\partial H$ denote the boundary edges of $H$, that is, the edges which have exactly one vertex in $H$ and one vertex not in $H$. If $\partial H$ is empty or all edges in it are trivial (labeled by $R$) then we say $H$ \textbf{partially determines} $G$. The collection of all subgraphs that partially determine $G$ are said to \textbf{fully determine} $G$. We will write $I_e \in \partial H$ to denote the ideal $I$ which labels the edge $e \in \partial H$.
\end{definition}

\begin{theorem}
\label{thm:summands}
Let $R$ be a commutative ring with edge-labeled graph $G$, and let $\{H_i\}$ be the collection of edge-labeled graphs that fully determine $G$. Then 
\[
R_G = \left(\bigoplus_j {R}\right) \oplus \left(\bigoplus_i R_{H_i}\right)
\]
\end{theorem}

where $j$ ranges over the vertices with only trivial edges, and $i$ ranges over the $\{H_i\}$. We call the first and second part \textbf{the trivial summand} and \textbf{the determining summand}. The components of the former and latter are each called \textbf{a trivial summand} and \textbf{a determining summand}.
\begin{proof}
First note that if a vertex has only trivial edges, then there are no GKM-constraints imposed. Thus, the vertex can be labeled by any element of $R$ without restriction. Consequently, we can see $R_G$ contains one direct summand for each such vertex as claimed. Now the vertices of each $H_i$ only have the GKM-constraints imposed precisely by the edges of each $H_i$, and hence the labelings for $H_i$ are precisely those of $R_{H_i}$, and these labelings are independent of the other determining subgraphs. Hence we can see each $H_i$ contributes a summand $R_{H_i}$ to $R_G$. Since this accounts for every vertex, we can see $R_G = (\bigoplus_j {R}) \oplus (\bigoplus_i R_{H_i})$ as claimed. 
\end{proof}

\begin{porism}
\label{por:summand}
Let $R$ be a commutative ring, $G$ and edge-labeled graph. If $H$ partially determines $G$, then $R_H$ is a summand of $R_G$.
\end{porism}
\begin{proof}
Immediate from Theorem~\ref{thm:summands}.
\end{proof}

As an example consider the graph in Figure~\ref{fig:three-hexagons} which is formed by three linking hexagons. Notice that the leftmost hexagon has all edge ideals whose generators are powers of $3$, the second hexagon ideals have powers of $5$ as a factor of their generator, and finally the last has powers of $7$. Now a flat base change to the localization of $\Z$ at $\langle 3 \rangle$, denoted $\Z\!\left[\tfrac13\right]$, gives a Zariski open subset $\Spec(\Z[\frac{1}{3}]) \subseteq \Spec(\Z)$ with $3$ now a unit. Thus, in this case we can perform a flat base change to $\Spec(\Z[\frac{1}{3}])$ by pullback along the open immersion $\Spec(\Z[\frac{1}{3}]) \hookrightarrow \Spec(\Z)$.

\begin{figure}
\centering
\begin{tikzpicture}[scale=2.4,
                    every node/.style={font=\scriptsize},
                    ideal/.style={inner sep=1pt}]
  \coordinate (A1) at (-1,0);
  \coordinate (B1) at (-0.5,  0.866);
  \coordinate (C1) at ( 0.5,  0.866);
  \coordinate (D1) at ( 1,    0);
  \coordinate (E1) at ( 0.5, -0.866);
  \coordinate (F1) at (-0.5, -0.866);
  \coordinate (A2) at ( 0,    0);
  \coordinate (B2) at ( 0.5,  0.866);
  \coordinate (C2) at ( 1.5,  0.866);
  \coordinate (D2) at ( 2,    0);
  \coordinate (E2) at ( 1.5, -0.866);
  \coordinate (F2) at ( 0.5, -0.866);
  \coordinate (A3) at ( 1,    0);
  \coordinate (B3) at ( 1.5,  0.866);
  \coordinate (C3) at ( 2.5,  0.866);
  \coordinate (D3) at ( 3,    0);
  \coordinate (E3) at ( 2.5, -0.866);
  \coordinate (F3) at ( 1.5, -0.866);
  \newcommand{\edge}[4][]{\draw (#2) -- (#3) node[midway,ideal,#1] {#4};}
  \edge[above left]  {A1}{B1}{$\langle 3^1\rangle$}
  \edge[above]       {B1}{C1}{$\langle 3^2\rangle$}
  \edge[above right] {C1}{D1}{$\langle 3^3\rangle$}
  \edge[right]       {D1}{E1}{$\langle 3^4\rangle$}
  \edge[below]       {E1}{F1}{$\langle 3^5\rangle$}
  \edge[below left]  {F1}{A1}{$\langle 3^6\rangle$}
  \edge[above]       {A2}{B2}{$\langle 5^1\rangle$}
  \edge[above]       {B2}{C2}{$\langle 5^2\rangle$}
  \edge[above right] {C2}{D2}{$\langle 5^3\rangle$}
  \edge[right]       {D2}{E2}{$\langle 5^4\rangle$}
  \edge[below]       {E2}{F2}{$\langle 5^5\rangle$}
  \edge[below]       {F2}{A2}{$\langle 5^6\rangle$}
  \edge[above]       {A3}{B3}{$\langle 7^1\rangle$}
  \edge[above]       {B3}{C3}{$\langle 7^2\rangle$}
  \edge[above right] {C3}{D3}{$\langle 7^3\rangle$}
  \edge[right]       {D3}{E3}{$\langle 7^4\rangle$}
  \edge[below]       {E3}{F3}{$\langle 7^5\rangle$}
  \edge[below]       {F3}{A3}{$\langle 7^6\rangle$}
  \foreach \v in {A1,B1,C1,D1,E1,F1,
                  A2,C2,D2,E2,
                  A3,C3,D3,E3,
                  B2,B3,F2,F3}
      \fill (\v) circle (1.4pt);
\end{tikzpicture}
\caption{Three linking hexagons with edge ideal generators divisible by $3$, $5$, and $7$, respectively.}
\label{fig:three-hexagons}
\end{figure}

Notice that the tensor product of $\Z/\langle 3 \rangle$ and $\Z[\frac{1}{3}]$ is the zero ring $0$. Hence the associated edge ideals of the form $\langle 3^a \rangle$ must equal $\Z[\frac{1}{3}]$ after flat base change, since $3$ becomes a unit which forces the ideal to be all of $\Z[\frac{1}{3}]$. It follows that a flat base change to $U = \Spec(\Z[\frac{1}{3}])$ will force the GKM-condition for the leftmost hexagon to become trivial. Now if we perform a base change to $\Spec(\Z[\frac{1}{3}][\frac{1}{5}])$, the GKM-condition for all but the rightmost hexagon becomes trivial, as shown in Figure~\ref{fig:basechange-hexagon}.

\begin{figure}
\centering
\begin{tikzpicture}[scale=2.4,
                    every node/.style={font=\scriptsize},
                    ideal/.style={inner sep=1pt}]
  \coordinate (A1) at (-1,0);
  \coordinate (B1) at (-0.5,  0.866);
  \coordinate (C1) at ( 0.5,  0.866);
  \coordinate (D1) at ( 1,    0);
  \coordinate (E1) at ( 0.5, -0.866);
  \coordinate (F1) at (-0.5, -0.866);
  \coordinate (A2) at ( 0,    0);
  \coordinate (B2) at ( 0.5,  0.866);
  \coordinate (C2) at ( 1.5,  0.866);
  \coordinate (D2) at ( 2,    0);
  \coordinate (E2) at ( 1.5, -0.866);
  \coordinate (F2) at ( 0.5, -0.866);
  \coordinate (A3) at ( 1,    0);
  \coordinate (B3) at ( 1.5,  0.866);
  \coordinate (C3) at ( 2.5,  0.866);
  \coordinate (D3) at ( 3,    0);
  \coordinate (E3) at ( 2.5, -0.866);
  \coordinate (F3) at ( 1.5, -0.866);
  \newcommand{\edgeDot}[4][]{\draw[dotted]      (#2) -- (#3) node[midway,ideal,#1] {#4};}
  \newcommand{\edgeBold}[4][]{\draw[very thick] (#2) -- (#3) node[midway,ideal,#1] {#4};}
  \newcommand{\Zpq}{$\mathbb{Z}\!\bigl[\tfrac13\bigr]\!\bigl[\tfrac15\bigr]$}
  \edgeDot[above left]  {A1}{B1}{\Zpq}
  \edgeDot[above]       {B1}{C1}{\Zpq}
  \edgeDot[above right] {C1}{D1}{\Zpq}
  \edgeDot[right]       {D1}{E1}{\Zpq}
  \edgeDot[below]       {E1}{F1}{\Zpq}
  \edgeDot[below left]  {F1}{A1}{\Zpq}
  \edgeDot[above]       {A2}{B2}{\Zpq}
  \edgeDot[above]       {B2}{C2}{\Zpq}
  \edgeDot[above right] {C2}{D2}{\Zpq}
  \edgeDot[right]       {D2}{E2}{\Zpq}
  \edgeDot[below]       {E2}{F2}{\Zpq}
  \edgeDot[below]       {F2}{A2}{\Zpq}
  \edgeBold[above right]       {A3}{B3}{$\langle 7^1\rangle$}
  \edgeBold[above]       {B3}{C3}{$\langle 7^2\rangle$}
  \edgeBold[above right] {C3}{D3}{$\langle 7^3\rangle$}
  \edgeBold[right]       {D3}{E3}{$\langle 7^4\rangle$}
  \edgeBold[below]       {E3}{F3}{$\langle 7^5\rangle$}
  \edgeBold[below right]       {F3}{A3}{$\langle 7^6\rangle$}
  \foreach \v in {A1,B1,C1,D1,E1,F1,
                  A2,C2,D2,E2,
                  A3,C3,D3,E3,
                  B2,B3,F2,F3}
        \fill (\v) circle (1.4pt);
\end{tikzpicture}
\caption{After base change to $\Spec\left(\Z\!\left[\tfrac13\right]\!\left[\tfrac15\right]\right)$, only the rightmost hexagon imposes non-trivial conditions.}
\label{fig:basechange-hexagon}
\end{figure}

Note that the edge-labeled graph in Figure~\ref{fig:basechange-hexagon} is now over $\Z[\frac{1}{3}][\frac{1}{5}]$ and not the original ring. Notice that the edges labeled with $\Z[\frac{1}{3}][\frac{1}{5}]$ do not impose any stipulations on adjacent vertices. As a consequence the only non-trivial requirements fall along the vertices which form a cycle. The hexagon in this case is an example of a fully determining subgraph. If all edges impose a trivial GKM-condition after base change, then we say the graph has become trivial. In the example with $U = \Spec(\Z[\frac{1}{3}][\frac{1}{5}]) \subseteq \Spec(\Z)$, we write $G_U$ to denote the graph in Figure~\ref{fig:basechange-hexagon}.

We will now explore the geometry of trivializing an edge upon base change. Suppose we have a two-vertex edge-labeled graph with an associated edge diagram $R_u \rightarrow R/I_{uv} \leftarrow R_v$, and for simplicity assume the ideals are principal. Then if we take the complement $U = V(I_{uv})^c$ of $V(I_{uv})$ to obtain a standard open set, after base change we have the edge diagram $(R_U)_u \rightarrow 0 \leftarrow (R_U)_v$. The fiber product for this will be isomorphic to $R_U \times R_U$. At the level of schemes note that:
\begin{align*}
  \Spec(R_U \times R_U) &= \Spec(R_U) \bigsqcup \Spec(R_U)
\end{align*}

This corresponds to two copies of $\Spec(R_U)$ that are not glued together. Hence after restricting to the standard open set $U = V(I_{uv})^c$, the gluing instructions of the edge-labeled diagram become ``take two copies of $\Spec(R_U)$ and do not glue.'' This is because the prime ideals that were to be glued along were knocked out of the spectrum entirely. If the ideal were not principal, then the complement may not be a distinguished open set (or even an affine scheme). In this case, we could still cover the open set with distinguished opens, and then the trivialization claim would hold within each distinguished open.

Now consider the flat base change of $\Spec(R_{G})$ to a distinguished $U$. If we consider the following pullback diagram:

\[
\begin{tikzcd}[row sep=3.5em, column sep=4em]
\Spec(R_{G})\times_{\Spec(R)} U
  \arrow[r, hook, ""]
  \arrow[d, ""'] &
\Spec(R_{G}) \arrow[d, ""] \\[0.3em]
U \arrow[r, hook, ""'] &
\Spec(R)
\arrow[ul, phantom, very near start]
\end{tikzcd}
\]

The net effect is to restrict each copy of $\Spec(R)$ sitting inside $\Spec(R_{G})$ to $U$. This is visualized in Figure~\ref{fig:flat-base-change}. To see why this leads to the fact flat base change commutes with colimits, consider again the cycle in Figure~\ref{fig:cycle-pushout}. If we pullback the ring of splines along a standard open set with $\langle x \rangle$ knocked out, then the cycle comes unglued at $\langle x \rangle$. But if we flat base change before gluing, then we never glue along $\langle x \rangle$ in the first place. This is visualized in Figure~\ref{fig:flat-base-change-colimit}.

\begin{figure}
  \centering
  \includegraphics[width=0.95\linewidth]{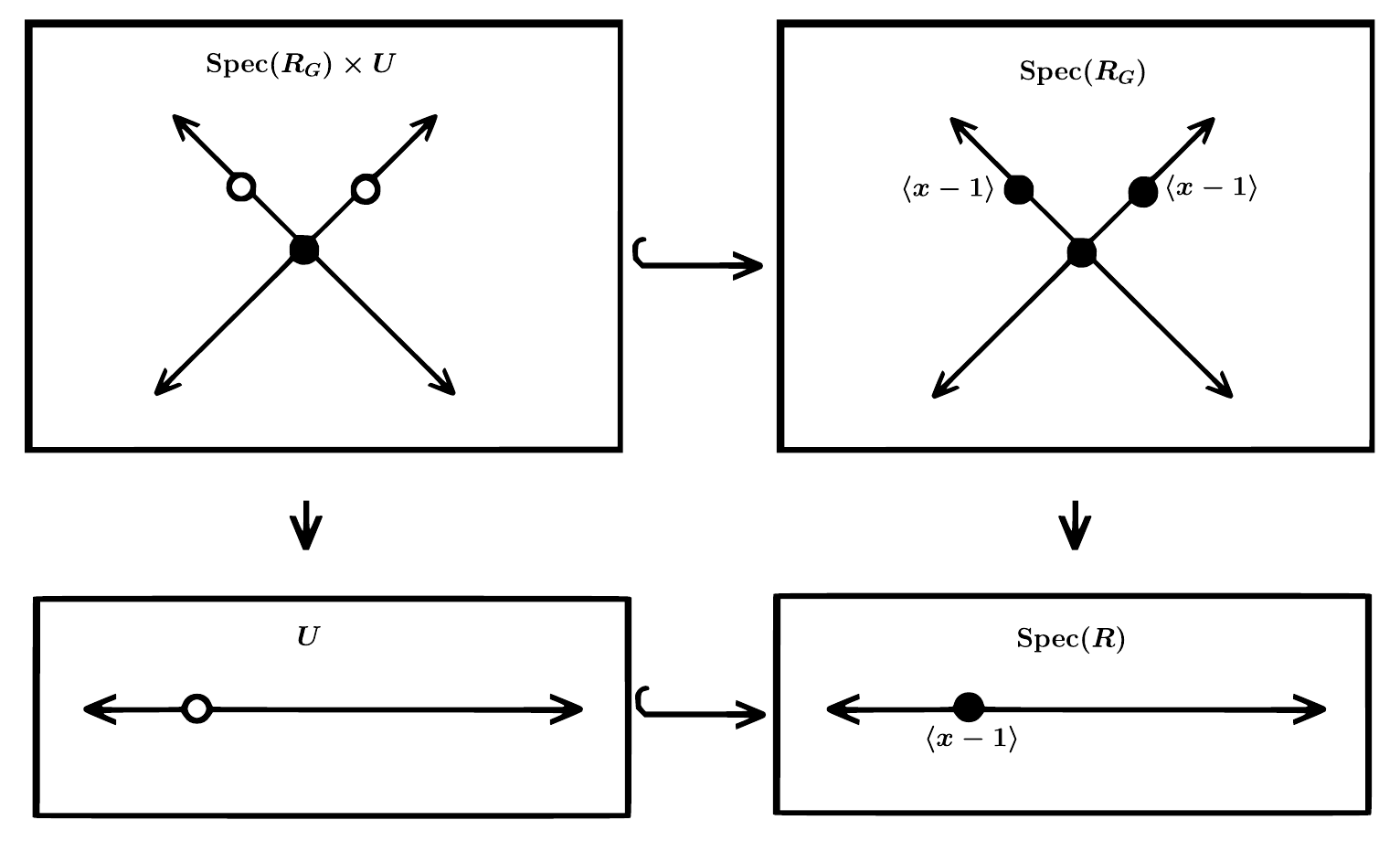}
  \caption{Flat base change diagram.}
  \label{fig:flat-base-change}
\end{figure}

\begin{figure}
  \centering
  \includegraphics[width=0.95\linewidth]{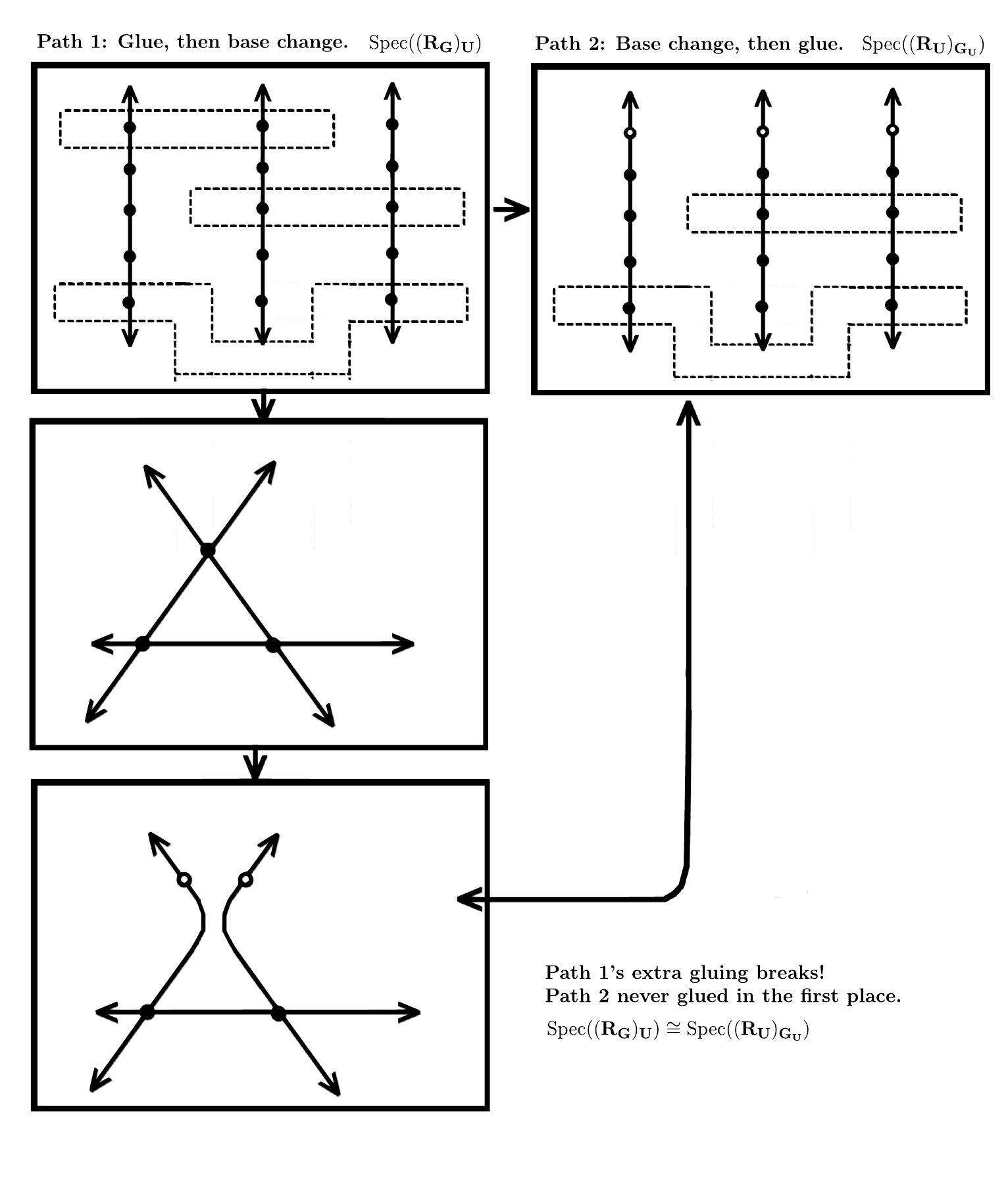}
  \caption{Flat base change commuting with colimit.}
  \label{fig:flat-base-change-colimit}
\end{figure}

In summary, in this section we first showed how the functor $F_S$ induces an edge-labeled graph $F_S(G)$ from an edge-labeled graph $G$. Next we used this to define graph restriction, and then showed that flat base change commutes with colimits of spline diagrams of schemes. That essentially establishes that flat base change induces a transformation of spline structure morphisms. Finally, we defined lifts of edge-labeled graphs along suitable functors, graph restriction, and then explored an example which motivated the notion of an  edge-labeled graph being locally determined by a subgraph. Thus, in the previous section we established the scheme-theoretic perspective, and now we have collected enough foundational results to work with it. We can now quickly harvest several results that do not exist in the generalized spline literature.

\section{Applications}
Before we begin with applications, it is worth mentioning the importance of standard (or distinguished) open sets versus general open sets as they relate to edge subschemes. In general, an open set of $\Spec(R)$ need not be affine, so flat base change to an arbitrary open subset could take $\Spec(R_G)$ to a scheme that is not affine, and we would lose access to the standard results for generalized splines. If we want to create a standard open set in which an edge ideal $I$ trivializes, we should not just work on the complement $U = V(I)^c$ of $V(I)$, but instead consider inverting generators of $I$. On the other hand, if $U$ is not standard, we could still cover $U$ by standard opens. But it is possible these standard opens could interact with the other edge labels in unexpected ways, so caution is needed for this route, as we cannot assume $I$ trivializes but not any of the other edge labels within the standard covering of $U$. Because of these complexities, the results which follow will either directly assume the cover is standard as a hypothesis, or construct standard open sets explicitly by inverting elements.

In \textit{multivariate generalized splines and syzygies on graphs} Samet Sar{\i}o\u{g}lan and Selma Alt{\i}nok established that if $R = k[x,y]$ is a polynomial ring over a field with edge-labeled graph $G$ a cycle, then $R_{G}$ is free \cite{sarioglan2024multivariate}. We can now obtain a generalization as follows:

\begin{corollary}
\label{cor:local-cycle-free}
Suppose $R = k[x,y]$ is a ring of polynomials in two variables over a field $k$. Let $G$ be an edge-labeled graph with edges labeled by principal ideals. If there is a distinguished open cover of $\Spec(R)$ such that $G$ is locally trivial or fully determined by cycles, then $R_G$ is free.
\end{corollary}
\begin{proof}
Let $R$, $G$, and $\{U_{f_i}\}$ be given as specified. Notice a cycle can clearly be lifted to a cycle over the original base ring, which is free by the criteria of Samet Sar{\i}o\u{g}lan and Selma Alt{\i}nok  \cite{sarioglan2024multivariate}, which implies $G$ is locally free by Corollary~\ref{cor:globally-free-implies-locally-free}. The claim follows by Corollary~\ref{cor:locally-free}.
\end{proof}

The previous result illustrates the one-two impact of Corollary~\ref{cor:globally-free-implies-locally-free} followed by Corollary~\ref{cor:locally-free}. They allow us to apply results about the original ring in the local context, and then patch the local pictures together into a global result. A concrete example appears in the appendix, where we prove freeness for a graph formed by interlinking hexagons. We can also obtain a more general form. For multivariate generalized splines over a polynomial ring in arbitrarily many variables, a cycle (or graph of disconnected cycles) is free provided the linear span of the principal edge-labelings (called the rank of the cycle) is at most $2$ \cite{sarioglan2024multivariate}.

\begin{corollary}
\label{cor:local-cycle-free2}
Suppose $R = k[x_1, \dots, x_n]$ is a ring of polynomials over a field $k$. Let $G$ be an edge-labeled graph with edges labeled by principal ideals. If there is a distinguished open cover of $\Spec(R)$ such that $G$ is locally trivial or fully determined by disconnected cycles with lifts of rank at most $2$ (which are known to be free) then $R_G$ is free.
\end{corollary}
\begin{proof}
Let $R$, $G$, and $R_G$ be given as specified. Since each fully determining local cycle lifts to one of rank $2$, they are free by the criteria of Samet Sar{\i}o\u{g}lan and Selma Alt{\i}nok  \cite{sarioglan2024multivariate}, which implies $G$ is locally free by Corollary~\ref{cor:globally-free-implies-locally-free}. The claim follows by Corollary~\ref{cor:locally-free}.
\end{proof}

These first applications relied on a standard cover. However, we have two immediate porisms that follow because localization at a prime ideal is also flat base change.

\begin{porism}
Suppose $R = k[x,y]$ is a ring of polynomials in two variables over a field $k$. Let $G$ be an edge-labeled graph with edges labeled by principal ideals. If for each prime ideal $\mathfrak p$ of $R$, $G_{\mathfrak p}$ is trivial or fully determined by cycles, then $R_G$ is free.
\end{porism}
\begin{porism}
Suppose $R = k[x_1, \dots, x_n]$ is a ring of polynomials over a field $k$. Let $G$ be an edge-labeled graph with edges labeled by principal ideals. If for each prime ideal $\mathfrak p$ of $R$, $G_{\mathfrak p}$ is trivial or fully determined by disconnected cycles with lifts of rank at most $2$ (which are known to be free) then $R_G$ is free.
\end{porism}
\begin{proof}
Both Corollary~\ref{cor:globally-free-implies-locally-free}, and Corollary~\ref{cor:locally-free} work with either a standard cover or localization at all prime ideals, so the previous proofs carry over verbatim.
\end{proof}

The proofs of the previous results imply that the problem of freeness for multivariate generalized splines may be encoded by local rings. We can now codify this with a theorem.

\begin{theorem}
Let $R= A[x_1,\dots,x_n]$ with $A$ a PID or field. Every edge-labeled graph with principal ideal labels is free over $R$ if and only if every edge-labeled graph with principal ideal labels is free over each $R_{\mathfrak p}$, with $\mathfrak p$ a prime ideal of $R$.
\end{theorem}
\begin{proof}
Let $R$ be given as specified, and let $\mathfrak p$ be some prime ideal of $R$. Suppose every edge-labeled graph over $R$ is free. Then for any edge labeled graph $G$ over $R_{\mathfrak p}$, we can lift to a corresponding edge-labeled graph over $R$ which is free by hypothesis. Since free-liftable implies free by Lemma~\ref{lma:free-lift}, $G$ is free. So we can see the first implication holds.

For the second implication, suppose each edge-labeled graph over $R_{\mathfrak p}$ is free. But that just says every edge-labeled graph over $R$ is locally free, and since $R$ is subject to the Quillen-Suslin theorem, the edge-labeled graph must be free. So the second implication holds. Since both implications hold, the claim follows.
\end{proof}

A consequence of the previous result is that freeness of multivariate generalized splines is controlled entirely by the presence or absence of local obstructions. If there are no local obstructions to freeness, then freeness holds globally. If there are local obstructions to freeness, then freeness does not hold globally. This can be studied with traditional combinatorial techniques applied to local rings.

Generalized splines have not historically permitted edges labeled with $R$. Therefore, the following definition only appeared in the very special case where all edge-labels were exactly the same.

\begin{definition} 
Let $R$ be a commutative ring and $G$ an edge-labeled graph. $G$ is said to be \textbf{mono} if each partially determining subgraph $H_i$ have all edges labeled by the same ideal $I_i$. If every $I_i$ has a minimum of $k$ generators, then $G$ is said to be \textbf{$k$-mono}. If every generator is regular (not a zero-divisor) then $G$ is said to be \textbf{regular $k$-mono}. If there exists a standard open cover $\{U_{f}\}$ of $R$ such that each $G_{f}$ is mono, or if $G_{\mathfrak p}$ is mono for each prime ideal $\mathfrak p$ of $R$, then $G$ is said to be \textbf{locally mono}. The definitions for locally $k$-mono, locally regular $k$-mono, etc., are similar. Note that the graph where all edges are labeled with $R$ is \textbf{vacuously} regular $k$-mono.
\end{definition}

The next result will be used to preclude global freeness by a local argument.

\begin{lemma}
\label{lma:proj-ideal-principal}
Let $R$ be an integral domain. Suppose $\Pic(R)=0$. Then every non-zero projective ideal $I$ of $R$ is regular and principal.
\end{lemma}
\begin{proof}
Let $I \subseteq R$ be an ideal which is projective as an $R$-module. Since $I$ is finitely generated and projective, it is locally free: for each prime $\mathfrak p$, $I_{\mathfrak p}$ is a free $R_{\mathfrak p}$-module. Because $I_{\mathfrak p} \subseteq R_{\mathfrak p}$ and $R_{\mathfrak p}$ is free of rank $1$, $I_{\mathfrak p}$ must be free of rank $\le 1$. Since $R$ is an integral domain, $I_{\mathfrak p}$ is free of rank $1$. Thus $I$ is a locally free rank $1$ $R$-module. Now since $\Pic(R) = 0$, every locally free rank $1$ $R$-module is isomorphic to $R$, which is only possible if $I$ is principal and its generator is regular.
\end{proof}

\begin{lemma}
\label{lma:mono}
Let $R$ be a commutative ring and let $G$ be an edge-labeled graph. If $G$ is fully determined by regular $1$-mono subgraphs, then $R_G$ is free. If $R$ is an integral domain, $\Pic(R) = 0$, and $G$ is partially determined by a $k>1$-mono subgraph, then $R_G$ is not free.  
\end{lemma}

\begin{proof}
Suppose $G$ is fully determined by regular $1$-mono subgraphs $H_i$. Since each $H_i$ has the same label $I_i$, we have the module isomorphism $R_{H_i} \cong R \oplus I_i \cdots \oplus I_i$ given by mapping the spline $(r, r+i_1, \dots , r+i_{n-1})$ to $(r, i_1, \dots , i_{n-1})$. Now since $I_i$ is principal and generated by a non-zero divisor, we know each $I_i$ is isomorphic to $R$ as an $R$-module. But that implies each $R_{H_i}$ is a sum of free modules, and hence free. Now since $R_G$ is fully determined by the $H_i$, we can see $R_G$ is a sum of free modules by Theorem~\ref{thm:summands} and hence free.

On the other hand, suppose $R$ is also an integral domain with $\Pic(R)=0$, and that $R_G$ is partially determined by $H$ which is a $k>1$-mono subgraph. As before, we still have that $R_H$ has $I$ as a summand, which implies $R_G$ has $I$ as a summand by Theorem~\ref{thm:summands}. For the sake of contradiction, assume $R_G$ is free. Then $I$ is a summand of a free module (Porism~\ref{por:summand}) hence projective. Now by Lemma~\ref{lma:proj-ideal-principal} it must be that $I$ is principal and generated by a single regular element of $R$. But that contradicts the fact $H$ was assumed to be $k>1$-mono or non-regular $1$-mono! So it must be that the assumption was wrong, and therefore $R_G$ is not free.
\end{proof}

The previous result was global in nature. Historically, such graphs would only show up when all edge-labels were exactly the same. Now, however, these types of graphs can show up locally, so they can arise in the study of graphs that are not mono. We can now extract the local-global version of Lemma~\ref{lma:mono} which follow from Corollary~\ref{cor:locally-free} and Corollary~\ref{cor:globally-free-implies-locally-free}.

\begin{corollary}
\label{cor:mono-not-free-cor}
Let $R$ be an integral domain, $G$ an edge-labeled graph, and $R_G$ the ring of splines. If there exists a standard open set $U_f$ such that $\Pic(R_f)=0$ and where $G_f$ is partially determined by a $k>1$-mono subgraph, then $R_G$ is not free. In particular, when $R$ is a UFD the integral domain and $\Pic(R_f)=0$ requirements are automatically satisfied.
\end{corollary}
\begin{proof}
Let $R$, $G$, $R_G$, and $U_f$ be given as specified. Since $G_f$ is partially determined by a $k>1$-mono subgraph, $R_f$ is an integral domain, and $\Pic(R_f)=0$, it follows that $G_f$ not free by Lemma~\ref{lma:mono}, and therefore $R_G$ is not free by Corollary~\ref{cor:globally-free-implies-locally-free}.
\end{proof}

The previous result requires $R$ to be an integral domain and also that $\Pic(R_f)=0$, which is automatically true for any UFD. Thus, for a ring of polynomials over a UFD, it suffices to find $U_f$ with a $k>1$-mono subgraph to satisfy the hypothesis. In the appendix, there is a table of non-free edge-labeled graphs over polynomial rings that were selected via this observation.

\begin{corollary}
\label{cor:mono-free-cor}
Suppose $R$ is a commutative ring and let $G$ be locally regular $1$-mono, then $R_G$ is projective. If $R$ is a PID or of the form $A[x_1,\dots,x_n]$ with $A$ a PID or field, then $R_G$ is also free.
\end{corollary}
\begin{proof}
The conditions imply $G$ is locally free because of Lemma~\ref{lma:mono}, so the claim follows by Corollary~\ref{cor:locally-free}.
\end{proof}

\begin{corollary}
Suppose $R$ is an integral domain, $G$ is an edge-labeled graph with labels principal ideals, and $R_G$ the ring of splines. If every pair of distinct edge subschemes of $G$ is pairwise disjoint, then $R_G$ is free.
\end{corollary}
\begin{proof}
Let $\mathfrak p$ be any prime ideal of $R$. Notice $G_{\mathfrak p}$ has trivial edges for all except the edge subschemes which intersect $\mathfrak p$, of which there is at most one by the hypothesis, and furthermore the associated ideal is regular and principal. Therefore $G_{\mathfrak p}$ is either regular $1$-mono properly, or vacuously so provided $\mathfrak p$ does not intersect any of the edge subschemes. The claim follows by Corollary~\ref{cor:locally-free}.
\end{proof}

Consider the following which is close to a well known result in the generalized spline literature, which we can now (almost) reprove using Corollary~\ref{cor:mono-free-cor}. A citation to the original proof is not included because the author was unable to determine where the result first appeared.

\begin{corollary}
\label{cor:almost}
Let $R$ be a commutative ring, $G$ an edge-labeled graph, and $R_G$ the ring of splines. If $R$ is a principal ideal domain, and if none of the generators of labeling ideals has irreducible factors with multiplicity greater than $1$, then $R_G$ is free.
\end{corollary}
\begin{proof}
Let $r_1, r_2, \dots, r_n$ be the irreducible factors of the generators of the edge-labeled graph, each of which has multiplicity $1$ by hypothesis. Notice that when $\mathfrak p$ is a prime ideal not equal to $\langle r_1 \rangle, \langle r_2 \rangle, \dots, \langle r_n\rangle$, then $G_{\mathfrak p}$ is vacuously regular 1-mono because every edge is trivial. Otherwise, $G_{\langle r_i \rangle}$ has $\langle r_i \rangle$ as the only non-trivial edge-ideal, so $G_{\langle r_i \rangle}$ is regular $1$-mono. So we can see $G$ is locally regular $1$-mono and the claim follows by Corollary~\ref{cor:mono-free-cor}
\end{proof}

The previous proof relies on the fact the Zariski topology of a PID is relatively simple, which prevents irreducible generators from intersecting within the Zariski topology. The result fails to reprove the full result that a ring of splines over a PID is free because there are currently no results on local rings to pull on. As an example of the problem of multiplicity, suppose we have an edge-labeled graph with edges $\langle 2 \rangle$ and $\langle 4 \rangle$. Then localizing at $\langle 2 \rangle$ allows both of these distinct edge-labels to survive, so we lose access to the regular $1$-mono argument. 

\begin{proposition}
\label{prop:boundary-trivialization}
Let $R$ be a UFD, $G$ an edge-labeled graph, and $R_G$ the ring of splines. Suppose $G$ contains (but is not necessarily partially determined by) a subgraph $H$. Let $U_{\partial H}$ be the open set obtained by inverting the multiplicative set formed by the generators of the $I_e \in \partial H$. If $H_{U_{\partial H}}$ is $k>1$-mono, then $R_G$ is not free.
\end{proposition}
\begin{proof}
Let $R$, $G$, $R_G$, $H$, and $U_{\partial H}$ be given as specified. Notice that each boundary ideal $I_e \in \partial H$ is trivial within $U_{\partial H}$. Therefore $G_{U_{\partial H}}$ is partially determined by $H_{U_{\partial H}}$, which is $k>1$-mono by hypothesis, and since $R$ is a UFD the claim follows by Corollary~\ref{cor:mono-not-free-cor}
\end{proof}

The proposition implies that the presence of ideals with multiple generators has a strong chance of preventing an edge-labeled graph from being free, at least for Noetherian rings. This is because if the associated edge subscheme does not intersect any of the adjacent edge subschemes, then freeness is impossible. Thus, an ideal with more than $1$ generator in a Noetherian domain can only live in an edge-labeled graph with a free module of splines when it satisfies a very specific structural relationship with every adjacent ideal.

\begin{proposition}
\label{prop:noetherian-free}
Let $R$ be a Noetherian integral domain and let $G$ be an edge-labeled graph where every edge label is principal. Then if for every pair of edge labels $I_i$ and $I_j$ we have that $I_i + I_j = R$ (i.e. every edge scheme $\Spec(R/I)$ is pairwise disjoint) then $R_G$ is projective. If $R$ is a PID or of the form $A[x_1,\dots,x_n]$ with $A$ a PID or field, then $R_G$ is also free.
\end{proposition}
\begin{proof}
Let $U^{I_i}$ be the standard open obtained by inverting every generator except the one for $I_i$. Now since the edge subschemes are all pairwise disjoint, it must be that every point in $\Spec(R)$ is contained in at least one of the $U^{I_i}$, so they form an open covering. To see each $G_{U^{I_i}}$ is $1$-mono, notice that after inverting every generator except one, every edge trivializes except the one for the generator that wasn't inverted. So within each standard open set, the edge-labeled graph has all non-trivial edges labeled by $I_i$. Finally, because $R$ is integral by hypothesis, the generators are also regular, so $G$ is locally regular $1$-mono, and hence projective by Corollary~\ref{cor:mono-free-cor}. If $R=A[x_1,\dots,x_n]$ with $A$ a principal ideal domain or field, then $R_G$ is free by Quillen-Suslin.
\end{proof}

The previous result is heavy-handed: the criteria forces a single ideal to survive within each standard open set, when in reality multiple disconnected components could survive, provided the edges within each connected component share the same ideal. The appendix contains examples of the results in this section applied to control freeness of concrete edge-labeled graphs.

\section{Deletion and Contraction}
In the foundational paper \textit{Generalized Splines on Arbitrary Graphs} by Gilbert, Tymoczko, and Viel \cite{gilbert2016generalized}, Question 7.4 asks:

\begin{center}
\emph{``How do classical graph-theoretic constructions (like deletion and contraction) affect the algebraic structure of splines $R_{G}$?''}
\end{center}

We can now address the question of deletion and contraction in terms of how it affects $\Spec(R_{G})$. Deletion of an edge was already discussed previously via addition of an edge. It corresponds to passing from the coequalizer of the following diagram back to $\Spec(R_{G})$.
\[
\Spec(R_{G'}) \leftarrow \Spec(R_{G}) \;\leftleftarrows\; \Spec(R/I_{uv}).
\]

This is because to add an edge, we pass from $\Spec(R_{G})$ to $\Spec(R_{G'})$. It follows deletion of an edge reverses this process. This is not an operation easily inverted: given $\Spec(R_{G'})$ (with an added edge), we cannot just delete the copy of $\Spec(R/I_{uv})$ that was used to glue. To see why, notice that if we glue three schemes along the same shared point, then deletion of the shared point will disconnect all three schemes. However, if we delete an edge, this means two of the schemes are glued, while the third remains disconnected, so we obtain a scheme with two connected components. Therefore, any algebraic attempt to describe edge deletion from the view of $R_{G'}$ will run into significant difficulty. 

Vertex deletion can be understood as first deleting all edges to the target vertex. Once the target vertex has no edges, it contributes an extra disconnected copy of $\Spec(R)$ to the spectrum. The last step is to remove the extra disconnected copy of $\Spec(R)$.

Finally it remains to discuss contraction. In the case of contraction we need to delete an edge and then identify two vertices. Since deletion of an edge was already described, we need only discuss identification of vertices. In this case, we are passing to and from the coequalizer of the diagram:

\[
\Spec(R_{G'}) \leftarrow \Spec(R_{G}) \;\leftleftarrows\; \Spec(R).
\]

Notice the simple description of how $\Spec(R_{G})$ is impacted doesn't immediately describe the algebraic structure of the ring of global sections. The fact the algebraic side is inscrutable is analogous to the tensor product of vector bundles: describing tensor product of vector bundles is simple algebraically, since it corresponds to fiber-wise tensor product. Describing the geometry of the resulting vector bundle, however, is not such an easy task. Similarly, we've described how $\Spec(R_{G})$ is impacted by deletion and contraction, without directly addressing how the sections are impacted.

\section{Conclusion}
We conclude with a few directions for future study. 
\begin{itemize}
\item Edge-labeled graphs over local rings are not well studied. However, they have clear application to a wide variety of rings due to their role in the local--global apparatus. 
\item How do spline structure morphisms behave within flat families?
\item Spline diagrams can appear in other categories, so what do they tell us in those contexts? For example, some definitions of Cartier divisors have a feature which resembles the GKM-condition.
\item How can we leverage schemes to study $r$-smoothness in the context of generalized splines? 
\item What does the geometry of $\Spec(R_{G})$ as it relates to $\Spec(R)$ tell us about $R_{G}$ itself?
\item The possible criteria to force open covers where an edge-labeled graph degenerates into a desired form is likely far from exhausted, so expanding the known criteria is a clear direction for future inquiry. 
\end{itemize}

\section{Acknowledgements}
The author would like to acknowledge Luke Smith, who introduced him to generalized splines minutes before a shared complex analysis class began. The author would also like to acknowledge Scott LaLonde, who advised the author's undergraduate senior seminar project that featured an early form of this paper, and Katie Anders and Michael DiPasquale for helpful feedback. Finally, the author would like to acknowledge his brother, Cody Stoltz, who created many of the graphics which appear in this paper based on the author's hand-drawn sketches.

\providecommand{\bysame}{\leavevmode\hbox to3em{\hrulefill}\thinspace}
\providecommand{\MR}{\relax\ifhmode\unskip\space\fi MR }
% \MRhref is called by the amsart/book/proc definition of \MR.
\providecommand{\MRhref}[2]{%
  \href{http://www.ams.org/mathscinet-getitem?mr=#1}{#2}
}
\providecommand{\href}[2]{#2}

\section*{Appendix: Examples and Computational Experiments}
We now consider multiple examples and computational experiments. We begin with an example as follows. Consider the edge-labeled graph $G$ with ideals in $\R[x,y]$ in Figure~\ref{fig:three-hexagons-rxy}; it consists of three interlocked hexagons, no two sharing an edge (only vertices) where the first has principal ideals of the form $\langle (x - 3)^r \rangle$, the second $\langle (x - 5)^r \rangle$, and the third $\langle (x - 7)^r \rangle$. If we invert $x-3$, $x-5$, or $x-7$, we will trivialize the edge conditions for the corresponding hexagon with edge ideals of the form $\langle (x-a)^d \rangle$. Thus, we can selectively trivialize all but one hexagon at a time. The Zariski open cover that satisfies the conditions of the theorem is:
\begin{align*}
  U_1 &= \Spec((x-3, x-5)^{-1}\R[x,y]) \\
  U_2 &= \Spec((x-3, x-7)^{-1}\R[x,y]) \\
  U_3 &= \Spec((x-5, x-7)^{-1}\R[x,y])
\end{align*}

Where $(x-a, x-b)^{-1}\R[x,y]$ indicates we've inverted $x-a$ and $x-b$. And the restrictions $G_{U_1}$, $G_{U_2}$ and $G_{U_3}$ are shown in Figure~\ref{fig:restrict-U1}, Figure~\ref{fig:restrict-U2}, and Figure~\ref{fig:restrict-U3}. We can clearly see the graph is locally determined by a cycle along each standard open set, and since $\Spec(R[x,y])$ is covered it follows from Corollary~\ref{cor:local-cycle-free} that $\Spec(R[x,y])_G$ is free.

\begin{figure}
\centering
\begin{tikzpicture}[scale=2.4,
                    every node/.style={font=\scriptsize},
                    ideal/.style={inner sep=1pt}]
  \coordinate (A1) at (-1,0);
  \coordinate (B1) at (-0.5,  0.866);
  \coordinate (C1) at ( 0.5,  0.866);
  \coordinate (D1) at ( 1,    0);
  \coordinate (E1) at ( 0.5, -0.866);
  \coordinate (F1) at (-0.5, -0.866);
  \coordinate (A2) at ( 0,    0);
  \coordinate (B2) at ( 0.5,  0.866);
  \coordinate (C2) at ( 1.5,  0.866);
  \coordinate (D2) at ( 2,    0);
  \coordinate (E2) at ( 1.5, -0.866);
  \coordinate (F2) at ( 0.5, -0.866);
  \coordinate (A3) at ( 1,    0);
  \coordinate (B3) at ( 1.5,  0.866);
  \coordinate (C3) at ( 2.5,  0.866);
  \coordinate (D3) at ( 3,    0);
  \coordinate (E3) at ( 2.5, -0.866);
  \coordinate (F3) at ( 1.5, -0.866);
  \newcommand{\edge}[4][]{\draw (#2) -- (#3) node[midway,ideal,#1] {#4};}
  \edge[above left]  {A1}{B1}{$\langle (x-3)^1\rangle$}
  \edge[above]       {B1}{C1}{$\langle (x-3)^2\rangle$}
  \edge[below left]  {C1}{D1}{$\langle (x-3)^3\rangle$}
  \edge[above left]  {D1}{E1}{$\langle (x-3)^4\rangle$}
  \edge[below]       {E1}{F1}{$\langle (x-3)^5\rangle$}
  \edge[below left]  {F1}{A1}{$\langle (x-3)^6\rangle$}
  \edge[above]       {A2}{B2}{$\langle (x-5)^1\rangle$}
  \edge[above]       {B2}{C2}{$\langle (x-5)^2\rangle$}
  \edge[above right] {C2}{D2}{$\langle (x-5)^3\rangle$}
  \edge[right]       {D2}{E2}{$\langle (x-5)^4\rangle$}
  \edge[below]       {E2}{F2}{$\langle (x-5)^5\rangle$}
  \edge[below]       {F2}{A2}{$\langle (x-5)^6\rangle$}
  \edge[above]       {A3}{B3}{$\langle (x-7)^1\rangle$}
  \edge[above]       {B3}{C3}{$\langle (x-7)^2\rangle$}
  \edge[above right] {C3}{D3}{$\langle (x-7)^3\rangle$}
  \edge[right]       {D3}{E3}{$\langle (x-7)^4\rangle$}
  \edge[below]       {E3}{F3}{$\langle (x-7)^5\rangle$}
  \edge[below]       {F3}{A3}{$\langle (x-7)^6\rangle$}
  \foreach \v in {A1,B1,C1,D1,E1,F1,
                  A2,C2,D2,E2,
                  A3,C3,D3,E3,
                  B2,B3,F2,F3}
      \fill (\v) circle (1.4pt);
\end{tikzpicture}
\caption{Three hexagons in $\R[x,y]$ with edge ideals given as powers of $x-3$, $x-5$, and $x-7$.}
\label{fig:three-hexagons-rxy}
\end{figure}

\begin{figure}
\centering 
\begin{tikzpicture}[scale=2.4,
                    every node/.style={font=\scriptsize},
                    ideal/.style={inner sep=1pt}]
  \coordinate (A1) at (-1,0);
  \coordinate (B1) at (-0.5,  0.866);
  \coordinate (C1) at ( 0.5,  0.866);
  \coordinate (D1) at ( 1,    0);
  \coordinate (E1) at ( 0.5, -0.866);
  \coordinate (F1) at (-0.5, -0.866);
  \coordinate (A2) at ( 0,    0);
  \coordinate (B2) at ( 0.5,  0.866);
  \coordinate (C2) at ( 1.5,  0.866);
  \coordinate (D2) at ( 2,    0);
  \coordinate (E2) at ( 1.5, -0.866);
  \coordinate (F2) at ( 0.5, -0.866);
  \coordinate (A3) at ( 1,    0);
  \coordinate (B3) at ( 1.5,  0.866);
  \coordinate (C3) at ( 2.5,  0.866);
  \coordinate (D3) at ( 3,    0);
  \coordinate (E3) at ( 2.5, -0.866);
  \coordinate (F3) at ( 1.5, -0.866);
  \newcommand{\edgeDot}[4][]{\draw[dotted]      (#2) -- (#3) node[midway,ideal,#1] {#4};}
  \newcommand{\edgeBold}[4][]{\draw[very thick] (#2) -- (#3) node[midway,ideal,#1] {#4};}
  \newcommand{\RxLocal}{$\mathcal O(U_1)$}
  \edgeDot[above left]  {A1}{B1}{\RxLocal}
  \edgeDot[above]       {B1}{C1}{\RxLocal}
  \edgeDot[above right] {C1}{D1}{\RxLocal}
  \edgeDot[right]       {D1}{E1}{\RxLocal}
  \edgeDot[below]       {E1}{F1}{\RxLocal}
  \edgeDot[below left]  {F1}{A1}{\RxLocal}
  \edgeDot[above]       {A2}{B2}{\RxLocal}
  \edgeDot[above]       {B2}{C2}{\RxLocal}
  \edgeDot[above right] {C2}{D2}{\RxLocal}
  \edgeDot[right]       {D2}{E2}{\RxLocal}
  \edgeDot[below]       {E2}{F2}{\RxLocal}
  \edgeDot[below]       {F2}{A2}{\RxLocal}
  \edgeBold[below right] {A3}{B3}{$\langle (x-7)^1\rangle$}
  \edgeBold[above]       {B3}{C3}{$\langle (x-7)^2\rangle$}
  \edgeBold[above right] {C3}{D3}{$\langle (x-7)^3\rangle$}
  \edgeBold[right]       {D3}{E3}{$\langle (x-7)^4\rangle$}
  \edgeBold[below]       {E3}{F3}{$\langle (x-7)^5\rangle$}
  \edgeBold[below]       {F3}{A3}{$\langle (x-7)^6\rangle$}
  \foreach \v in {A1,B1,C1,D1,E1,F1,
                  A2,C2,D2,E2,
                  A3,C3,D3,E3,
                  B2,B3,F2,F3}
        \fill (\v) circle (1.4pt);
\end{tikzpicture}
\caption{Restriction $G_{U_1}$ where only the $x=7$ hexagon remains non-trivial.}
\label{fig:restrict-U1}
\end{figure}
 
\begin{figure}
\centering
\begin{tikzpicture}[scale=2.4,
                    every node/.style={font=\scriptsize},
                    ideal/.style={inner sep=1pt}]
  \coordinate (A1) at (-1,0);  \coordinate (B1) at (-0.5, 0.866);
  \coordinate (C1) at ( 0.5,0.866);  \coordinate (D1) at (1,0);
  \coordinate (E1) at (0.5,-0.866); \coordinate (F1) at (-0.5,-0.866);
  \coordinate (A2) at (0,0);   \coordinate (B2) at (0.5,0.866);
  \coordinate (C2) at (1.5,0.866); \coordinate (D2) at (2,0);
  \coordinate (E2) at (1.5,-0.866); \coordinate (F2) at (0.5,-0.866);
  \coordinate (A3) at (1,0);   \coordinate (B3) at (1.5,0.866);
  \coordinate (C3) at (2.5,0.866); \coordinate (D3) at (3,0);
  \coordinate (E3) at (2.5,-0.866); \coordinate (F3) at (1.5,-0.866);
  \newcommand{\edgeDot}[4][]{\draw[dotted]      (#2)--(#3) node[midway,ideal,#1]{#4};}
  \newcommand{\edgeBold}[4][]{\draw[very thick] (#2)--(#3) node[midway,ideal,#1]{#4};}
  \newcommand{\OUtwo}{$\mathcal O(U_2)$}
  \edgeDot[above left]  {A1}{B1}{\OUtwo}
  \edgeDot[above]       {B1}{C1}{\OUtwo}
  \edgeDot[above right] {C1}{D1}{\OUtwo}
  \edgeDot[right]       {D1}{E1}{\OUtwo}
  \edgeDot[below]       {E1}{F1}{\OUtwo}
  \edgeDot[below left]  {F1}{A1}{\OUtwo}
  \edgeBold[above]       {A2}{B2}{$\langle (x-5)^1\rangle$}
  \edgeBold[above]       {B2}{C2}{$\langle (x-5)^2\rangle$}
  \edgeBold[above right] {C2}{D2}{$\langle (x-5)^3\rangle$}
  \edgeBold[right]       {D2}{E2}{$\langle (x-5)^4\rangle$}
  \edgeBold[below]       {E2}{F2}{$\langle (x-5)^5\rangle$}
  \edgeBold[below]       {F2}{A2}{$\langle (x-5)^6\rangle$}
  \edgeDot[above]       {A3}{B3}{\OUtwo}
  \edgeDot[above]       {B3}{C3}{\OUtwo}
  \edgeDot[above right] {C3}{D3}{\OUtwo}
  \edgeDot[right]       {D3}{E3}{\OUtwo}
  \edgeDot[below]       {E3}{F3}{\OUtwo}
  \edgeDot[below]       {F3}{A3}{\OUtwo}
  \foreach \v in {A1,B1,C1,D1,E1,F1,
                  A2,C2,D2,E2,
                  A3,C3,D3,E3,
                  B2,B3,F2,F3}
        \fill (\v) circle (1.4pt);
\end{tikzpicture}
\caption{Restriction $G_{U_2}$ where only the $x=5$ hexagon remains non-trivial.}
\label{fig:restrict-U2}
\end{figure}
 
\begin{figure}
\centering
\begin{tikzpicture}[scale=2.4,
                    every node/.style={font=\scriptsize},
                    ideal/.style={inner sep=1pt}]
  \coordinate (A1) at (-1,0);  \coordinate (B1) at (-0.5, 0.866);
  \coordinate (C1) at ( 0.5,0.866);  \coordinate (D1) at (1,0);
  \coordinate (E1) at (0.5,-0.866); \coordinate (F1) at (-0.5,-0.866);
  \coordinate (A2) at (0,0);   \coordinate (B2) at (0.5,0.866);
  \coordinate (C2) at (1.5,0.866); \coordinate (D2) at (2,0);
  \coordinate (E2) at (1.5,-0.866); \coordinate (F2) at (0.5,-0.866);
  \coordinate (A3) at (1,0);   \coordinate (B3) at (1.5,0.866);
  \coordinate (C3) at (2.5,0.866); \coordinate (D3) at (3,0);
  \coordinate (E3) at (2.5,-0.866); \coordinate (F3) at (1.5,-0.866);
  \newcommand{\edgeDot}[4][]{\draw[dotted]      (#2)--(#3) node[midway,ideal,#1]{#4};}
  \newcommand{\edgeBold}[4][]{\draw[very thick] (#2)--(#3) node[midway,ideal,#1]{#4};}
  \newcommand{\OUthree}{$\mathcal O(U_3)$}
  \edgeBold[above left]  {A1}{B1}{$\langle (x-3)^1\rangle$}
  \edgeBold[above]       {B1}{C1}{$\langle (x-3)^2\rangle$}
  \edgeBold[below left]  {C1}{D1}{$\langle (x-3)^3\rangle$}
  \edgeBold[above left]  {D1}{E1}{$\langle (x-3)^4\rangle$}
  \edgeBold[below]       {E1}{F1}{$\langle (x-3)^5\rangle$}
  \edgeBold[below left]  {F1}{A1}{$\langle (x-3)^6\rangle$}
  \edgeDot[above]       {A2}{B2}{\OUthree}
  \edgeDot[above]       {B2}{C2}{\OUthree}
  \edgeDot[above right] {C2}{D2}{\OUthree}
  \edgeDot[right]       {D2}{E2}{\OUthree}
  \edgeDot[below]       {E2}{F2}{\OUthree} 
  \edgeDot[below]       {F2}{A2}{\OUthree} 
  \edgeDot[above]       {A3}{B3}{\OUthree}
  \edgeDot[above]       {B3}{C3}{\OUthree}
  \edgeDot[above right] {C3}{D3}{\OUthree}
  \edgeDot[right]       {D3}{E3}{\OUthree}
  \edgeDot[below]       {E3}{F3}{\OUthree}
  \edgeDot[below]       {F3}{A3}{\OUthree}
  \foreach \v in {A1,B1,C1,D1,E1,F1,
                  A2,C2,D2,E2,
                  A3,C3,D3,E3,
                  B2,B3,F2,F3}
        \fill (\v) circle (1.4pt);
\end{tikzpicture}
\caption{Restriction $G_{U_3}$ where only the $x=3$ hexagon remains non-trivial.}
\label{fig:restrict-U3}
\end{figure}

Before proceeding, we will discuss pathological behavior which requires caution to avoid. For example, $\Spec(\Q[x])$ includes points such as $\langle x^2-1 \rangle$, which means $V(x^2+1)$ is not empty, even though the solution set of $x^2+1$ over the rationals (and even reals) is empty. Thus, if we select an edge-labeled graph over $\Q[x,y]$ with generators that do not even intersect as real curves, there could still be non-trivial intersections within the Zariski topology, and thus, the covering condition could be violated unexpectedly. Proposition~\ref{prop:noetherian-free} used the check $I_i+I_j=R$ (instead of a naive geometric appeal to the Euclidean topology of plane curves) precisely to avoid this trap. That is because $I_i+I_j$ corresponds scheme theoretically to intersection, and when the intersection is empty the ideal sum contains a unit. 

Suppose we have $R=k[x,y]$ with $G$ a two vertex one edge graph labeled by $\langle x-1, y-2 \rangle$. Then we have two standard open sets, one given by inverting $x-1$ which trivializes $\langle x-1, y-2 \rangle$ and one given by inverting $y-2$ which also trivializes $\langle x-1, y-2 \rangle$. Within each standard open set, the module of splines becomes trivial. However, these standard open sets do not cover $\Spec(R)$ because both closed sets used to obtain them share a common point of intersection (namely $\langle x-1, y-2 \rangle$ itself), so the covering condition is not met.

Consider an edge-labeled graph with two vertices and two edges over $\R[x,y]$, one labeled by $\langle x^2+y^2-1 \rangle$ and one labeled by $\langle x-1, y \rangle$. If we obtain a standard open set by inverting $x^2+y^2-1$ it may be tempting to think the principally generated edge trivializes leaving the graph to be locally determined by a regular $2$-mono edge-labeled graph labeled with $\langle x-1, y \rangle$. However, when $x^2+y^2-1$ gets inverted, the only point of intersection between the two generators is knocked out of the prime spectrum, forcing $\langle x-1, y \rangle$ to trivialize. Before we continue with examples, we need propositions to reliably produce distinguished open covers.

\begin{proposition}
Let $R$ be a commutative ring and let $a$ be any element of $R$. Let $b = au + 1$ and $c = 1 + abv$, where $u, v \in R$. Then $V(a)$, $V(b)$ and $V(c)$ are pairwise disjoint, hence $\{U_{ab}, U_{ac}, U_{bc}\}$ is a cover of $\Spec(R)$. 
\end{proposition}
\begin{proof}
Notice $(-u)a + (1)b = 1$, $(-bv)a + (1)c = 1$, and $(-av)b + c = 1$. It follows that $\langle a, b \rangle = \langle 1 \rangle$, $\langle a, c \rangle = \langle 1 \rangle$, and $\langle b, c \rangle = \langle 1 \rangle$. The claim follows.
\end{proof}

\begin{proposition}
\label{prop:coveringR}
Let $R$ be any commutative ring with unity. Let $a_1, a_2, \dots, a_{n_1} \in R$ be regular elements. For any $r_1 \in R$, let $r_2 = r_1a_1+1, r_3 = r_2a_2 + 1, \dots, r_n = r_{n-1}a_{n-1} + 1$. The distinguished open sets $U_{r_i}$ cover $\Spec(R)$.
\end{proposition}
\begin{proof}
In order for $\{U_{r_i}\}$ to cover $\Spec(R)$, it is enough to show there is no point in every $V(r_i)$. To this end, notice that $r_2 - a_1r_1 = a_1r_1 + 1 - a_1r_1 = 1$, and for each $i, i+1$ we obtain a similar result. So it must be that $\langle r_i, r_{i+1} \rangle = \langle 1 \rangle$, and therefore the intersection every $V(r_i)$ is empty. That $\{U_{r_i}\}$ forms an open cover follows.
\end{proof}

The previous result is intuitively clear from a geometric point of view, since if for any $\mathfrak p \in \Spec(R)$ we have $r \equiv 0 \pmod{\mathfrak p}$, then $ar+1 \equiv 1 \pmod {\mathfrak p}$ for any $a$. A consequence of Proposition~\ref{prop:coveringR} is that if we have an edge-labeled graph with principal ideals that are labeled by combinations of exactly two such regular elements, then the module of splines is free. This is because any such edge-labeled graph is locally regular $1$-mono, since $U_{r_1}$ sets $r_1=1$ from the point of view of the generators, and $U_{r_2}$ sets $r_2=1$ from the point of view of the generators. That is, if $r_i$ is a unit, then $\langle r_i \rangle = \langle 1 \rangle$ within $U_{r_i}$.

We conclude with six edge-labeled graphs, each of which exemplifies a pattern of graph which is free when the specified covering conditions are met. Each of these were tested in Macaulay2 using the preceding propositions to generate suitable covers. After the graphs is a table of polynomials used as labels, followed by the results of the tests run in Macaulay2. The selection of polynomials and Macaulay2 code were created with assistance from GPT-5 according to the parameters specified by the author. The code used is included in the accompanying .m2 file for reproducibility.

\begin{figure}[ht]
  \centering
  \begin{tikzpicture}[scale=1.2, every node/.style={circle, fill=black, inner sep=1.5pt}]
    % vertices
    \node (A) at (0,0) {};
    \node (B) at (2,0) {};
    \node (C) at (2,2) {};
    \node (D) at (0,2) {};
    
    % edges of the square
    \draw (A)--(B)--(C)--(D)--(A);
    % diagonals
    \draw (A)--(C);
    \draw (B)--(D);
  \end{tikzpicture}
  \caption{Square graph with both diagonals filled in. Free for any pairwise disjoint standard open cover with principal ideals selected from the distinguished (regular) elements. If we select any pairwise disjoint distinguished open cover, and label the edges with products of these to any power, the result is locally regular $1$-mono.}
  \label{fig:square-diagonals}
\end{figure}
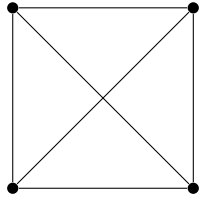

\begin{figure}[ht]
  \centering
  \begin{tikzpicture}[scale=1.25, line width=0.9pt]
    % vertex style (only applied to actual vertices)
    \tikzset{vertex/.style={circle, fill=black, inner sep=1.5pt}}

    % vertices
    \node[vertex] (Ltop) at (-2, 1) {};
    \node[vertex] (Lbot) at (-2,-1) {};
    \node[vertex] (Rtop) at ( 2, 1) {};
    \node[vertex] (Rbot) at ( 2,-1) {};
    \node[vertex] (M)    at ( 0, 0) {};

    % left triangle (labels: x on the far-left edge; b on the two edges meeting M)
    \draw (Ltop)--(Lbot) node[midway, left=3pt] {$x$};
    \draw (Ltop)--(M)    node[midway, above right=2pt] {$b$};
    \draw (Lbot)--(M)    node[midway, below right=2pt] {$b$};

    % right triangle (labels: y on the far-right edge; a on the two edges meeting M)
    \draw (Rtop)--(Rbot) node[midway, right=3pt] {$y$};
    \draw (Rtop)--(M)    node[midway, above left=2pt] {$a$};
    \draw (Rbot)--(M)    node[midway, below left=2pt] {$a$};
  \end{tikzpicture}
  \caption{Bowtie graph. Free for any distinguished cover $\{U_a, U_b\}$ and any selection of $x, y$. For a polynomial ring in three or more variables over a field, in each open set we get a regular $1$-mono edge-labeled graph and a cycle that lifts to one of rank $2$, which is free. }
  \label{fig:bowtie}
\end{figure}
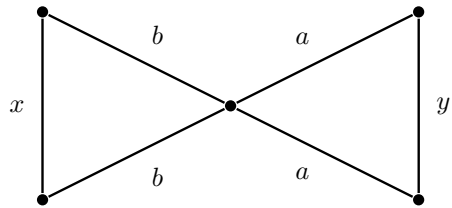

\begin{figure}[ht]
  \centering
  \begin{tikzpicture}[scale=1.2, line width=0.9pt]
    % vertex style only for points (keeps text clean)
    \tikzset{vertex/.style={circle, fill=black, inner sep=1.5pt}}

    % rectangle corners
    \node[vertex] (LT) at (-1,  1) {}; % left-top
    \node[vertex] (LB) at (-1, -1) {}; % left-bottom
    \node[vertex] (RT) at ( 1,  1) {}; % right-top
    \node[vertex] (RB) at ( 1, -1) {}; % right-bottom

    % triangle tips
    \node[vertex] (Ltip) at (-3, 0) {};
    \node[vertex] (Rtip) at ( 3, 0) {};

    % rectangle edges (c on horizontals, b on verticals)
    \draw (LT)--(RT) node[midway, above=2pt] {$c$};
    \draw (LB)--(RB) node[midway, below=2pt] {$c$};
    \draw (LT)--(LB) node[midway, left=3pt] {$b$};
    \draw (RT)--(RB) node[midway, right=3pt] {$b$};

    % left triangle slanted edges (label a)
    \draw (Ltip)--(LT) node[midway, above left=2pt] {$a$};
    \draw (Ltip)--(LB) node[midway, below left=2pt] {$a$};

    % right triangle slanted edges (label a)
    \draw (Rtip)--(RT) node[midway, above right=2pt] {$a$};
    \draw (Rtip)--(RB) node[midway, below right=2pt] {$a$};
  \end{tikzpicture}
  \caption{“Grilled Cheese” graph: Free for any standard open cover $\{U_a, U_b, U_c\}$. The grilled cheese graph degenerates into a rank $2$ cycle along $U_a$ (a square), into a rank $2$ cycle along $U_b$ (the boundary of the graph), and into two triangles of rank $2$ along $U_c$. }
  \label{fig:grilled-cheese}
\end{figure}
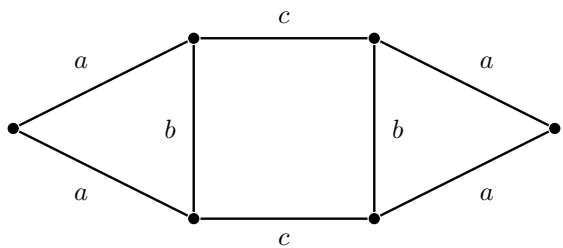

\begin{figure}[ht]
  \centering
  \begin{tikzpicture}[scale=0.9, line width=0.9pt]
    % vertex style only for points (keeps text clean)
    \tikzset{vertex/.style={circle, fill=black, inner sep=1.5pt}}

    % ==== place vertices (corners at x,y in {-3,-1,1,3}) ====
    % Outer corners
    \node[vertex] at (-1,  3) {}; \node[vertex] at ( 1,  3) {};
    \node[vertex] at (-1, -3) {}; \node[vertex] at ( 1, -3) {};
    \node[vertex] at (-3,  1) {}; \node[vertex] at (-3, -1) {};
    \node[vertex] at ( 3,  1) {}; \node[vertex] at ( 3, -1) {};
    % Inner junctions
    \node[vertex] at (-1,  1) {}; \node[vertex] at ( 1,  1) {};
    \node[vertex] at (-1, -1) {}; \node[vertex] at ( 1, -1) {};

    % ==== draw & label edges ====
    % Top box (y in [1,3], x in [-1,1])
    \draw (-1,3) -- ( 1,3) node[midway, above=2pt] {$x$};          % topmost edge = x
    \draw (-1,1) -- ( 1,1) node[midway, below=2pt] {$b$};          % shared with middle top = b
    \draw (-1,1) -- (-1,3) node[midway, left=3pt]  {$c$};          % top box left = c
    \draw ( 1,1) -- ( 1,3) node[midway, right=3pt] {$c$};          % top box right = c

    % Bottom box (y in [-3,-1], x in [-1,1])
    \draw (-1,-3) -- ( 1,-3) node[midway, below=2pt] {$x$};        % bottommost edge = x
    \draw (-1,-1) -- ( 1,-1) node[midway, above=2pt] {$b$};        % shared with middle bottom = b
    \draw (-1,-3) -- (-1,-1) node[midway, left=3pt]  {$c$};        % bottom box left = c
    \draw ( 1,-3) -- ( 1,-1) node[midway, right=3pt] {$c$};        % bottom box right = c

    % Left box (x in [-3,-1], y in [-1,1])
    \draw (-3, 1) -- (-1, 1) node[midway, above=2pt] {$x$};        % left box top = x
    \draw (-3,-1) -- (-1,-1) node[midway, below=2pt] {$x$};        % left box bottom = x
    \draw (-3,-1) -- (-3, 1) node[midway, left=3pt] {$x$};         % leftmost outer edge = x
    \draw (-1,-1) -- (-1, 1) node[midway, right=3pt] {$a$};        % shared with middle left = a

    % Right box (x in [1,3], y in [-1,1])
    \draw ( 1, 1) -- ( 3, 1) node[midway, above=2pt] {$x$};        % right box top = x
    \draw ( 1,-1) -- ( 3,-1) node[midway, below=2pt] {$x$};        % right box bottom = x
    \draw ( 3,-1) -- ( 3, 1) node[midway, right=3pt] {$x$};        % rightmost outer edge = x
    \draw ( 1,-1) -- ( 1, 1) node[midway, left=3pt] {$a$};         % shared with middle right = a

    % Middle box (x,y in [-1,1]) — edges already drawn; labels set above:
    % top (b), bottom (b), left (a), right (a)
  \end{tikzpicture}
  \caption{The Plus Graph: free for any open cover $\{U_{ab}, U_{ac}, U_{bc}\}$ and selection of $x$. The plus graph requires an open set where two edges degenerate at a time. Along $U_{ab}$ we have the large rank $2$ cycle coming from the boundary. Along $U_{ac}$ we get a central rectangle cycle of rank $2$ and two regular $1$-mono edge-labeled graphs. Finally, along $U_{bc}$ we get two regular $1$-mono edge-labeled graphs and two rank $2$ squares. }
  \label{fig:plus-graph}
\end{figure}
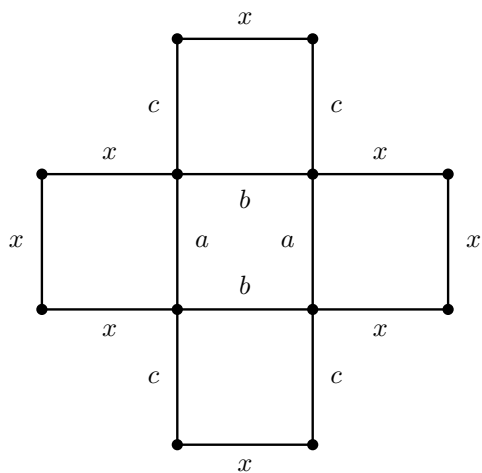

\begin{figure}
\centering
\begin{tikzpicture}[scale=2.4,
                    every node/.style={font=\scriptsize},
                    ideal/.style={inner sep=1pt}]
  % --- vertices (overlaps encode the linking) ---
  \coordinate (A1) at (-1,0);
  \coordinate (B1) at (-0.5,  0.866);
  \coordinate (C1) at ( 0.5,  0.866);
  \coordinate (D1) at ( 1,    0);
  \coordinate (E1) at ( 0.5, -0.866);
  \coordinate (F1) at (-0.5, -0.866);

  \coordinate (A2) at ( 0,    0);
  \coordinate (B2) at ( 0.5,  0.866); % = C1
  \coordinate (C2) at ( 1.5,  0.866);
  \coordinate (D2) at ( 2,    0);
  \coordinate (E2) at ( 1.5, -0.866);
  \coordinate (F2) at ( 0.5, -0.866); % = E1

  \coordinate (A3) at ( 1,    0);     % = D1
  \coordinate (B3) at ( 1.5,  0.866); % = C2
  \coordinate (C3) at ( 2.5,  0.866);
  \coordinate (D3) at ( 3,    0);
  \coordinate (E3) at ( 2.5, -0.866);
  \coordinate (F3) at ( 1.5, -0.866); % = E2

  \newcommand{\edge}[4][]{\draw (#2) -- (#3) node[midway,ideal,#1] {#4};}

  % --- left hexagon perimeter: <a> ---
  \edge[above left]  {A1}{B1}{$\langle a\rangle$}
  \edge[above]       {B1}{C1}{$\langle a\rangle$}
  \edge[above right] {C1}{D1}{$\langle a\rangle$}
  \edge[right]       {D1}{E1}{$\langle a\rangle$}
  \edge[below]       {E1}{F1}{$\langle a\rangle$}
  \edge[below left]  {F1}{A1}{$\langle a\rangle$}

  % --- middle hexagon perimeter: <b> ---
  \edge[above]       {A2}{B2}{$\langle b\rangle$}
  \edge[above]       {B2}{C2}{$\langle b\rangle$}
  \edge[above right] {C2}{D2}{$\langle b\rangle$}
  \edge[right]       {D2}{E2}{$\langle b\rangle$}
  \edge[below]       {E2}{F2}{$\langle b\rangle$}
  \edge[below]       {F2}{A2}{$\langle b\rangle$}

  % --- right hexagon perimeter: <c> ---
  \edge[above]       {A3}{B3}{$\langle c\rangle$}
  \edge[above]       {B3}{C3}{$\langle c\rangle$}
  \edge[above right] {C3}{D3}{$\langle c\rangle$}
  \edge[right]       {D3}{E3}{$\langle c\rangle$}
  \edge[below]       {E3}{F3}{$\langle c\rangle$}
  \edge[below right] {F3}{A3}{$\langle c\rangle$}

  % --- draw chosen visible vertices ---
  \foreach \v in {A1,B1,C1,D1,E1,F1,
                  A2,C2,D2,E2,
                  A3,C3,D3,E3,
                  B2,B3,F2,F3}
      \fill (\v) circle (1.4pt);
\end{tikzpicture}
\caption{Three interlinking hexagons with perimeter labels $\langle a\rangle$, $\langle b\rangle$, and $\langle c\rangle$; no radial center links. The module of splines is free provided there is a regular $1$-mono covering given by $\{U_{ab}, U_{ac}, U_{bc}\}$.}
\label{fig:three-hexagons-no-radials}
\end{figure}

\begin{figure}
\centering
\begin{tikzpicture}[scale=2.4,
                    every node/.style={font=\scriptsize},
                    ideal/.style={inner sep=1pt}]
  % Left hexagon (center near A2=(0,0))
  \coordinate (A1) at (-1,0);
  \coordinate (B1) at (-0.5,  0.866);
  \coordinate (C1) at ( 0.5,  0.866);
  \coordinate (D1) at ( 1,    0);
  \coordinate (E1) at ( 0.5, -0.866);
  \coordinate (F1) at (-0.5, -0.866);

  % Middle hexagon (center at A2=(0,0), right center is D2=(2,0))
  \coordinate (A2) at ( 0,    0);
  \coordinate (B2) at ( 0.5,  0.866);
  \coordinate (C2) at ( 1.5,  0.866);
  \coordinate (D2) at ( 2,    0);
  \coordinate (E2) at ( 1.5, -0.866);
  \coordinate (F2) at ( 0.5, -0.866);

  % Right hexagon (center near D2=(2,0))
  \coordinate (A3) at ( 1,    0);
  \coordinate (B3) at ( 1.5,  0.866);
  \coordinate (C3) at ( 2.5,  0.866);
  \coordinate (D3) at ( 3,    0);
  \coordinate (E3) at ( 2.5, -0.866);
  \coordinate (F3) at ( 1.5, -0.866);

  \newcommand{\edge}[4][]{\draw (#2) -- (#3) node[midway,ideal,#1] {#4};}

  %================ Perimeters (priority labels) ================
  % Left perimeter: <a>
  \edge[above left]  {A1}{B1}{$\langle a\rangle$}
  \edge[above]       {B1}{C1}{$\langle a\rangle$}
  \edge[above right] {C1}{D1}{$\langle a\rangle$}
  \edge[right]       {D1}{E1}{$\langle a\rangle$}
  \edge[below]       {E1}{F1}{$\langle a\rangle$}
  \edge[below left]  {F1}{A1}{$\langle a\rangle$}

  % Middle perimeter: <b>
  \edge[above]       {A2}{B2}{$\langle b\rangle$}
  \edge[above]       {B2}{C2}{$\langle b\rangle$}
  \edge[above right] {C2}{D2}{$\langle b\rangle$}
  \edge[right]       {D2}{E2}{$\langle b\rangle$}
  \edge[below]       {E2}{F2}{$\langle b\rangle$}
  \edge[below]       {F2}{A2}{$\langle b\rangle$}

  % Right perimeter: <c>
  \edge[above right] {A3}{B3}{$\langle c\rangle$}
  \edge[above]       {B3}{C3}{$\langle c\rangle$}
  \edge[above right] {C3}{D3}{$\langle c\rangle$}
  \edge[right]       {D3}{E3}{$\langle c\rangle$}
  \edge[below]       {E3}{F3}{$\langle c\rangle$}
  \edge[below right] {F3}{A3}{$\langle c\rangle$}

  %================ Extra center edges only (no overlaps) ================
  % Left hex → center A2, labeled <c>, but SKIP the two that coincide with middle perimeter:
  %   C1--A2 coincides with B2--A2, and E1--A2 coincides with F2--A2.
  \edge {A1}{A2}{$\langle c\rangle$}
  \edge {B1}{A2}{$\langle c\rangle$}
  \edge {D1}{A2}{$\langle c\rangle$}
  \edge {F1}{A2}{$\langle c\rangle$}

  % Right hex → center D2, labeled <a>, but SKIP the two that coincide with middle perimeter:
  %   B3--D2 coincides with C2--D2, and F3--D2 coincides with D2--E2.
  \edge {A3}{D2}{$\langle a\rangle$}
  \edge {C3}{D2}{$\langle a\rangle$}
  \edge {D3}{D2}{$\langle a\rangle$}
  \edge {E3}{D2}{$\langle a\rangle$}

  % Draw visible vertices
  \foreach \v in {A1,B1,C1,D1,E1,F1, A2,B2,C2,D2,E2,F2, A3,B3,C3,D3,E3,F3}
      \fill (\v) circle (1.4pt);
\end{tikzpicture}
\caption{Perimeter labels have priority: left \(\langle a\rangle\), middle \(\langle b\rangle\), right \(\langle c\rangle\).
Only the non-perimeter center edges are added. Locally a rank $1$ cycle within any pairwise disjoint covering $\{U_{ab}, U_{ac}, U_{bc}\}$.}
\label{fig:hexagons-no-double-label}
\end{figure}

\providecommand{\Z}{\mathbb Z}
\providecommand{\Q}{\mathbb Q}
\providecommand{\Spec}{\operatorname{Spec}}
\newcolumntype{Y}{>{\raggedright\arraybackslash}X}

\begin{table}[ht]
\centering
\footnotesize
\setlength{\extrarowheight}{0.25em}
\begin{tabularx}{\linewidth}{|l|Y|Y|Y|}
\hline
\textbf{ID} &
\textbf{Distinguished elements} &
\textbf{Open cover / hypothesis used} &
\textbf{Graphs} \\
\hline\hline

PairCover-1 &
\(a=x,\; b=1-xy\). &
Two-open cover \(\{U_a,U_b\}\) with \(b=ar+1\). &
Bowtie \\
\hline

PairCover-2 &
\(a=y+z,\; b=1-(w+1)(y+z)\). &
Same \(\{U_a,U_b\}\) pattern with nontrivial \(a\). &
Bowtie \\
\hline

PairCover-3 &
\(a=zw+1,\; b=1-(x+y)(zw+1)\). &
Same construction; tests composite \(a\). &
Bowtie \\
\hline

PairCover-4 &
\(a=z+1,\; b=1-(x+y)(z+1)\). &
Shifted variable; robustness check. &
Bowtie \\
\hline\hline

TripleCover-T3-1 &
\(a=x,\; b=xy+1,\; c=(xy+1)z+1\). &
Three-open chain cover \(\{U_a,U_b,U_c\}\). &
GrilledCheese \\
\hline

TripleCover-T3-2 &
\(a=y-1,\; b=(y-1)(z+1)+1,\; c=bw+1\). &
Same chain with different seeds. &
GrilledCheese \\
\hline

TripleCover-T3-3 &
\(a=x+z,\; b=(x+z)(y+w)+1,\; c=b(x-1)+1\). &
Chain using nontrivial linear combinations. &
GrilledCheese \\
\hline

TripleCover-T3-4 &
\(a=x+z,\; b=(x+z)(y+w)+1,\; c=b(x+1)+1\). &
Perturbation of TripleCover-T3-3. &
GrilledCheese \\
\hline\hline

TripleCover-PD3-1 &
\(a=x,\; b=1-yx,\; c=1+zab\). &
Pairwise-disjoint triple; \(\{U_{ab},U_{ac},U_{bc}\}\). &
Plus,\; SquareDiag,\; LinkedHexagons \\
\hline

TripleCover-PD3-2 &
\(a=y,\; b=1-(z+w)y,\; c=1+(x+1)yb\). &
Same construction with different generators. &
Plus,\; SquareDiag,\; LinkedHexagons \\
\hline

TripleCover-PD3-3 &
\(a=z+1,\; b=1-x(z+1),\; c=1+(y+w)(z+1)b\). &
Shifted variable version. &
Plus,\; SquareDiag,\; LinkedHexagons \\
\hline

TripleCover-PD3-4 &
\(a=x+z,\; b=1-(y+1)(x+z),\; c=1+w(x+z)b\). &
Long-arm variant. &
Plus,\; SquareDiag,\; LinkedHexagons \\
\hline

\end{tabularx}
\caption{\textbf{Distinguished elements and covers.} All polynomials lie in \(k[x,y,z,w]\) (typically \(k=\Q\)). The PairCover-\(\#\) entries use a two-open cover \(\{U_a,U_b\}\). The TripleCover-T3-\(\#\) entries use an iterated three-open chain \(\{U_a,U_b,U_c\}\) with \(b=ar+1,\; c=bs+1\). The TripleCover-PD3-\(\#\) entries use \(\{U_{ab},U_{ac},U_{bc}\}\) with \(V(a),V(b),V(c)\) pairwise disjoint.}
\label{tab:structured-covers}
\end{table}

\begin{table}[ht]
\centering
\footnotesize
\setlength{\extrarowheight}{0.25em}
\begin{tabularx}{\linewidth}{|l|Y|Y|Y|}
\hline
\textbf{ID} &
\textbf{Polynomials / ideals} &
\textbf{Purpose} &
\textbf{Graphs} \\
\hline\hline

StressTest-1 &
\(a=xy+1,\; b=yz+1,\; c=zw+1\). &
No chain or pairwise-disjoint structure; cover not guaranteed. &
Bowtie,\; GrilledCheese,\; Plus,\; SquareDiag,\; LinkedHexagons \\
\hline

StressTest-2 &
\(a=x^2+y^2-5,\; b=(y-2)^2+z^2-7,\; c=z^2+w^2-3\). &
Curved vanishing sets that still meet in the Zariski topology. &
Bowtie,\; GrilledCheese,\; Plus,\; SquareDiag,\; LinkedHexagons \\
\hline

StressTest-3 &
\(a=x,\; b=y,\; c=z\). &
Coordinate axes; \(\langle a,b\rangle,\langle a,c\rangle,\langle b,c\rangle\) need not sum to \(R\). &
Bowtie,\; GrilledCheese,\; Plus,\; SquareDiag,\; LinkedHexagons \\
\hline

\end{tabularx}
\caption{\textbf{Stress tests.} These inputs deliberately violate the covering hypotheses. The experiments record rank and global freeness with no theoretical prediction.}
\label{tab:stress-tests}
\end{table}

\begin{table}[ht]
\centering
\footnotesize
\setlength{\extrarowheight}{0.25em}
\begin{tabularx}{\linewidth}{|l|Y|Y|}
\hline
\textbf{ID} &
\textbf{Polynomial in \(k[x,y,z,w]\)} &
\textbf{Use in the graphs} \\
\hline\hline

Noise-A &
\(w^2+1\) &
Assigned to unlabeled “filler” edges in \textit{Bowtie} and \textit{Plus}. Often becomes a unit after localization. \\
\hline

Noise-B &
\(x^2+y^2-5\) &
Assigned to secondary filler edges in \textit{Bowtie}; used to force a curved vanishing set. \\
\hline

\end{tabularx}
\caption{\textbf{Noise polynomials.} These are used on edges that are not part of the controlling cover data \((a,b,c)\); they are intended to verify that the predicted behavior is stable under additional irrelevant constraints.}
\label{tab:noise-polynomials}
\end{table}

\begin{table}[ht]
\centering
\footnotesize
\setlength{\extrarowheight}{0.25em}
\begin{tabularx}{\linewidth}{|l|Y|Y|Y|}
\hline
\textbf{ID} &
\textbf{Elements / labels} &
\textbf{Open set} &
\textbf{Graphs} \\
\hline\hline

PD2MonoLoc-xy &
\(a=x,\; b=1-xy,\; c=\langle z,\,w\rangle\). &
On \(U_{ab}\), the \(a\)- and \(b\)-edges trivialize and the remaining \(c\)-edges form a connected subgraph. &
GrilledCheese,\; SquareDiag,\; LinkedHexagons \\
\hline

PD2MonoLoc-yz &
\(a=y,\; b=1-yz,\; c=\langle x,\,w\rangle\). &
On \(U_{ab}\), the surviving \(c\)-edges are connected. &
GrilledCheese,\; SquareDiag,\; LinkedHexagons \\
\hline

PD2MonoLoc-xz &
\(a=x,\; b=1-xz,\; c=\langle y,\,w\rangle\). &
On \(U_{ab}\), the surviving \(c\)-edges are connected. &
GrilledCheese,\; SquareDiag,\; LinkedHexagons \\
\hline
\end{tabularx}
\caption{\textbf{Local connected \(2\)-mono scenarios.} Here \(V(a)\) and \(V(b)\) are forced to be disjoint, and \(c\) is a two-generated ideal. In these runs, the graph is arranged so that after restricting to \(U_{ab}\) all \(c\)-labeled edges remain connected.}
\label{tab:pd2monoloc-labels}
\end{table}

\begin{table}[ht]
\centering
\label{tab:controlled-results}
\begin{tabular}{lllll}
\toprule
Scenario & Graph & Rank & isFree? & Result \\
\midrule
PairCover-1      & Bowtie                        & 5  & true  & PASS \\
PairCover-2      & Bowtie                        & 5  & true  & PASS \\
PairCover-3      & Bowtie                        & 5  & true  & PASS \\
PairCover-4      & Bowtie                        & 5  & true  & PASS \\
\midrule
TripleCover-T3-1 & GrilledCheese                 & 6  & true  & PASS \\
TripleCover-T3-2 & GrilledCheese                 & 6  & true  & PASS \\
TripleCover-T3-3 & GrilledCheese                 & 6  & true  & PASS \\
TripleCover-T3-4 & GrilledCheese                 & 6  & true  & PASS \\
\midrule
TripleCover-PD3-1 & Plus                         & 12 & true  & PASS \\
TripleCover-PD3-1 & SquareDiag                   & 4  & true  & PASS \\
TripleCover-PD3-1 & LinkedHex (perimeter only)   & 13 & true  & PASS \\
TripleCover-PD3-1 & LinkedHex (center spokes)    & 13 & true  & PASS \\
TripleCover-PD3-2 & Plus                         & 12 & true  & PASS \\
TripleCover-PD3-2 & SquareDiag                   & 4  & true  & PASS \\
TripleCover-PD3-2 & LinkedHex (perimeter only)   & 13 & true  & PASS \\
TripleCover-PD3-2 & LinkedHex (center spokes)    & 13 & true  & PASS \\
TripleCover-PD3-3 & Plus                         & 12 & true  & PASS \\
TripleCover-PD3-3 & SquareDiag                   & 4  & true  & PASS \\
TripleCover-PD3-3 & LinkedHex (perimeter only)   & 13 & true  & PASS \\
TripleCover-PD3-3 & LinkedHex (center spokes)    & 13 & true  & PASS \\
TripleCover-PD3-4 & Plus                         & 12 & true  & PASS \\
TripleCover-PD3-4 & SquareDiag                   & 4  & true  & PASS \\
TripleCover-PD3-4 & LinkedHex (perimeter only)   & 13 & true  & PASS \\
TripleCover-PD3-4 & LinkedHex (center spokes)    & 13 & true  & PASS \\
\bottomrule
\end{tabular}
\caption{Computed global spline modules for the controlled scenarios.
In these families (\textit{PairCover-\#}, \textit{TripleCover-T3-\#}, and \textit{TripleCover-PD3-\#}) the covering hypotheses predict that the global spline module is free. Macaulay2 confirms this prediction for every tested graph, including both linked-hexagon variants (\emph{perimeter only} and \emph{center spokes}).}
\end{table}

\begin{table}[ht]
\centering
\label{tab:stress-results}
\begin{tabular}{lllll}
\toprule
Scenario & Graph & Rank & isFree? & Result \\
\midrule
StressTest-1 & Bowtie                      & 5  & true  & OBSERVED \\
StressTest-1 & GrilledCheese               & 6  & true  & OBSERVED \\
StressTest-1 & Plus                        & 12 & false & OBSERVED \\
StressTest-1 & SquareDiag                  & 4  & true  & OBSERVED \\
StressTest-1 & LinkedHex (perimeter only)  & 13 & false & OBSERVED \\
StressTest-1 & LinkedHex (center spokes)   & 13 & false & OBSERVED \\
\midrule
StressTest-2 & Bowtie                      & 5  & true  & OBSERVED \\
StressTest-2 & GrilledCheese               & 6  & true  & OBSERVED \\
StressTest-2 & Plus                        & 12 & false & OBSERVED \\
StressTest-2 & SquareDiag                  & 4  & true  & OBSERVED \\
StressTest-2 & LinkedHex (perimeter only)  & 13 & false & OBSERVED \\
StressTest-2 & LinkedHex (center spokes)   & 13 & false & OBSERVED \\
\midrule
StressTest-3 & Bowtie                      & 5  & true  & OBSERVED \\
StressTest-3 & GrilledCheese               & 6  & true  & OBSERVED \\
StressTest-3 & Plus                        & 12 & false & OBSERVED \\
StressTest-3 & SquareDiag                  & 4  & true  & OBSERVED \\
StressTest-3 & LinkedHex (perimeter only)  & 13 & false & OBSERVED \\
StressTest-3 & LinkedHex (center spokes)   & 13 & false & OBSERVED \\
\bottomrule
\end{tabular}
\caption{Computed global spline modules for the stress tests.
These inputs (\textit{StressTest-\#}) intentionally violate the covering hypotheses, so there is \emph{no theoretical prediction}. We record only the observed global freeness and rank. The two linked-hexagon geometries behave the same way numerically (rank \(13\), not free).}
\end{table}

\begin{table}[ht]
\centering
\label{tab:pd2monoloc-results}
\begin{tabular}{lllll}
\toprule
Scenario & Graph & Rank & isFree? & Result \\
\midrule
PD2MonoLoc-xy & GrilledCheese                & 6  & false & PASS \\
PD2MonoLoc-xy & SquareDiag                   & 4  & false & PASS \\
PD2MonoLoc-xy & LinkedHex (perimeter only)   & 13 & false & PASS \\
PD2MonoLoc-xy & LinkedHex (center spokes)    & 13 & false & PASS \\
\midrule
PD2MonoLoc-yz & GrilledCheese                & 6  & false & PASS \\
PD2MonoLoc-yz & SquareDiag                   & 4  & false & PASS \\
PD2MonoLoc-yz & LinkedHex (perimeter only)   & 13 & false & PASS \\
PD2MonoLoc-yz & LinkedHex (center spokes)    & 13 & false & PASS \\
\midrule
PD2MonoLoc-xz & GrilledCheese                & 6  & false & PASS \\
PD2MonoLoc-xz & SquareDiag                   & 4  & false & PASS \\
PD2MonoLoc-xz & LinkedHex (perimeter only)   & 13 & false & PASS \\
PD2MonoLoc-xz & LinkedHex (center spokes)    & 13 & false & PASS \\
\bottomrule
\end{tabular}
\caption{Computed global spline modules for the local connected \(2\)-mono scenarios.
Here the hypothesis predicts that the global spline module is \emph{not} free (the cover forces \(a\) and \(b\) to trivialize on \(U_{ab}\), leaving a connected \(2\)-generated \(c\)-locus). All twelve runs (\textit{PD2MonoLoc-xy}, \textit{-yz}, \textit{-xz} across all graphs tested) agree with the prediction: rank is unchanged, and \texttt{isFree} is \texttt{false}.}
\end{table}


\begin{thebibliography}{1}

\bibitem{StacksIntegralFinite}
The Stacks~Project Authors, \emph{Integral and finite morphisms}, 2025,
  \url{https://stacks.math.columbia.edu/tag/01WG}, Section 29.44 of the Stacks
  Project.

\bibitem{StacksPushoutsII}
\bysame, \emph{Pushouts in the category of schemes, {II}}, 2025,
  \url{https://stacks.math.columbia.edu/tag/0ECH}, Section 37.67 of the Stacks
  Project.

\bibitem{billera1992modules}
Louis~J Billera and Lauren~L Rose, \emph{Modules of piecewise polynomials and
  their freeness}, Mathematische Zeitschrift \textbf{209} (1992), no.~1,
  485--497.

\bibitem{gilbert2016generalized}
Simcha Gilbert, Julianna Tymoczko, and Shira Viel, \emph{Generalized splines on
  arbitrary graphs}, Pacific Journal of Mathematics \textbf{281} (2016), no.~2,
  333--364.

\bibitem{sarioglan2024multivariate}
Samet Sarioglan and Selma Altinok, \emph{Multivariate generalized splines and
  syzygies on graphs}, Journal of Algebra and Its Applications \textbf{23}
  (2024), no.~02, 2450039.

\end{thebibliography}
\end{document}